\let\OLDthebibliography\thebibliography
\renewcommand\thebibliography[1]{
  \OLDthebibliography{#1}
  \setlength{\parskip}{1pt}
  \setlength{\itemsep}{0pt plus 0.0ex}
}
\def\numberlikeadb{\global\def\theequation{\thesection.\arabic{equation}}}
\newtheorem{theorem}{Theorem}[section]
\newtheorem{lemma}[theorem]{Lemma}
\newtheorem{corollary}[theorem]{Corollary}
\newtheorem{remark}[theorem]{Remark}
\begin{document}

\title{Integrals of products of four modified Bessel functions}
\author{Robert E. Gaunt\footnote{Department of Mathematics, The University of Manchester, Oxford Road, Manchester M13 9PL, UK, robert.gaunt@manchester.ac.uk}}

\date{} 
\maketitle

\vspace{-5mm}

\begin{abstract} We evaluate definite integrals involving the product of four modified Bessel functions of the first and second kind and a power function. We provide general formulas expressed in terms of the Meijer $G$-function and generalized hypergeometric and Lauricella $F_C$ functions, and study a number of special cases in which the integrals can be evaluated in terms of simpler special functions or indeed take an elementary form. As a consequence, we deduce some new formulas for definite integrals of products of four Airy functions.
\end{abstract}

\noindent{{\bf{Keywords:}}} Modified Bessel function; integral; Mellin transform; Airy function; Meijer $G$-function; generalized hypergeometric function

\noindent{{{\bf{AMS 2020 Subject Classification:}}} Primary 33C10; 33C20; 33C60
}

\section{Introduction}

Definite integrals involving products of Bessel functions have received much attention in the literature. General formulas for products of two and three Bessel functions are compiled in standard references such as \cite{e54,g07,olver,integralbook}. General formulas for the product of four or more Bessel functions are harder to come by, but this topic has also received much interest (see, for example \cite{b07,b36,c09,f13,g12,m13,m94,ps21,s64,z17}), and some of these formulas are tabulated in the standard references \cite{e54,g07,integralbook}. Evaluation of the (modified) Bessel moments $\int_0^\infty x^kI_\alpha^m(x)K_\beta^n(x)\,\mathrm{d}x$, $\alpha,\beta\in\{0,1\}$, $k,m,n\in\mathbb{N}$, has received considerable interest on account of the fact that these integrals arise in areas of mathematical physics, including condensed matter physics \cite{b07,o05} and quantum field theory \cite{b08,b21}. Here, $I_\nu(x)$ and $K_\nu(x)$ denote the modified Bessel functions of the first and second kind, respectively.

For integrals of products of four or more modified Bessel functions $I_\nu(x)$ and $K_\nu(x)$, whilst the case $\nu\in\{0,1\}$ has received much attention (for example in the aforementioned references from the physics literature), the general case $\nu\in\mathbb{C}$ (such that the integral exists) appears, to the best knowledge of this author, to have not received a detailed treatment in the literature. In this paper, we fill in this gap by evaluating general definite integrals involving products of four modified Bessel functions of the first and second kind, that is integrals of the form $\int_0^\infty x^{s-1}\{\prod_{i=1}^{4-n} I_{\mu_i}(a
_i x)\}\{\prod_{j=1}^n K_{\nu_j}(b_j x)\}\,\mathrm{d}x$ for $n=1,2,3,4$, with the parameters being such that the integral converges. In Section \ref{sec2.1}, we provide general formulas expressed in terms of the Meijer $G$-function, the generalized hypergeometric function and the Lauricella $F_C$ function, all of which are defined in Appendix \ref{appa}. For certain parameter values, an application of reduction formulas for the Meijer $G$-function and the generalized hypergeometric function allows for evaluation of the integrals in terms of simpler special functions. We study several such cases in Section \ref{sec2.2}, obtaining a number of rather elegant expressions, and identify certain cases in which the integrals take an elementary form. As far as this author is aware, the majority of the integral formulas presented in Section \ref{sec2} are new; we explicitly point out instances in which integral formulas have previously been derived in the literature.

Definite integrals involving products of Airy functions also arise in mathematical physics \cite{m78,t15,vs10} and the evaluation of these integrals has been the subject of a number of works; see, for example, \cite{ar15,am15,l93,r95,r97a,r97b,vs97,v10}. In Section \ref{sec3}, we apply some of our formulas from Section \ref{sec2} to obtain new formulas for definite integrals of products of four Airy functions. Our formulas complement those of \cite{ar15,l93,r97b} in which closed-form formulas were given for definite integrals involving products of four Airy functions. The proofs of the results from Section \ref{sec2} are given in Section \ref{sec4}. Finally, in Appendix \ref{appa}, we state a number of basic properties of special functions that are utilised in this paper.



\section{Integrals of products of four modified Bessel functions}\label{sec2}

\subsection{General formulas}\label{sec2.1}

In the following theorem, we evaluate the Mellin transform of the product of four modified Bessel functions. The formulas are expressed in terms of the Meijer $G$-function, the regularized generalized hypergeometric function and Lauricella $F_C$ function, all of which are defined in Appendix \ref{appa}. The proof of Theorem \ref{thm1.1} is deferred to Section \ref{sec4}, as is the case for the proofs of all theorems in Section \ref{sec2}. 

\begin{theorem}\label{thm1.1} 1. Suppose that $\mathrm{Re}\,a,\mathrm{Re}\,b>0$ and $\mathrm{Re}\, s>|\mathrm{Re}\,\alpha|+|\mathrm{Re}\,\beta|+|\mathrm{Re}\,\gamma|+|\mathrm{Re}\,\delta|$. Then
\begin{align} 
&\int_0^\infty x^{s-1} K_\alpha(a x) K_\beta(a x) K_\gamma(b x) K_\delta(b x) \,\mathrm{d}x\nonumber\\
&\quad=\frac{\pi}{8a^{s}}G_{6,6}^{4,4}\bigg(\frac{a^2}{b^2}\;\bigg|\;{\frac{2-\gamma-\delta}{2},\frac{2+\gamma-\delta}{2},\frac{2+\gamma-\delta}{2},\frac{2+\gamma+\delta}{2},\frac{s}{2},\frac{s+1}{2} \atop \frac{s+\alpha+\beta}{2},\frac{s-\alpha+\beta}{2},\frac{s+\alpha-\beta}{2},\frac{s-\alpha-\beta}{2},\frac{1}{2},1}\bigg).   \label{for1}
\end{align} 

\noindent 2. Suppose that $\mathrm{Re}\,a,\mathrm{Re}\,b>0$ and $\mathrm{Re}\,s>|\mathrm{Re}\,\beta|+|\mathrm{Re}\,\gamma|+|\mathrm{Re}\,\delta|-\mathrm{Re}\,\alpha$. Then
\begin{align} 
&\int_0^\infty x^{s-1} I_\alpha(a x) K_\beta(a x) K_\gamma(b x) K_\delta(b x) \,\mathrm{d}x\nonumber\\
&\quad=\frac{1}{8a^s}G_{6,6}^{2,6}\bigg(\frac{a^2}{b^2}\;\bigg|\;{\frac{2-\gamma-\delta}{2},\frac{2+\gamma-\delta}{2},\frac{2+\gamma-\delta}{2},\frac{2+\gamma+\delta}{2},\frac{s}{2},\frac{s+1}{2} \atop \frac{s+\alpha+\beta}{2},\frac{s+\alpha-\beta}{2},\frac{s-\alpha+\beta}{2},\frac{s-\alpha-\beta}{2},\frac{1}{2},1}\bigg).   \label{k3igen}
\end{align}  

\noindent 3. Suppose that $\mathrm{Re}\,b>\mathrm{Re}\,a>0$ and $\mathrm{Re}\,s>|\mathrm{Re}\,\gamma|+|\mathrm{Re}\,\delta|-\mathrm{Re}(\alpha+\beta)$, or $\mathrm{Re}\,b=\mathrm{Re}\,a>0$ and $|\mathrm{Re}\,\gamma|+|\mathrm{Re}\,\delta|-\mathrm{Re}(\alpha+\beta)<s<2$. Then
\begin{align}
&\int_0^\infty x^{s-1} I_\alpha(a x) I_\beta(a x) K_\gamma(b x) K_\delta(b x) \,\mathrm{d}x\nonumber\\
&\quad= \frac{1}{4a^s}\bigg(\frac{a}{b}\bigg)^{\lambda+s}\Gamma\big(\tfrac{\lambda+1}{2}\big)\Gamma\big(\tfrac{\lambda+2}{2}\big)\Gamma\big(\tfrac{s+\lambda+\gamma+\delta}{2}\big)\Gamma\big(\tfrac{s+\lambda-\gamma+\delta}{2}\big)\Gamma\big(\tfrac{s+\lambda+\gamma-\delta}{2}\big)\Gamma\big(\tfrac{s+\lambda-\gamma-\delta}{2}\big)\nonumber\\
&\quad\quad\times {}_6\tilde{F}_5\bigg({\frac{\lambda+1}{2},\frac{\lambda+2}{2},\frac{s+\lambda+\gamma+\delta}{2},\frac{s+\lambda-\gamma+\delta}{2},\frac{s+\lambda+\gamma-\delta}{2},\frac{s+\lambda-\gamma-\delta}{2} \atop 
\alpha+1,\beta+1,\lambda+1,\frac{s+\lambda}{2},\frac{s+\lambda+1}{2}} \;\bigg|\;\frac{a^2}{b^2}\bigg), \label{iikkgen}
\end{align}
where $\lambda=\alpha+\beta$.

\vspace{3mm}

\noindent 4. Let $\mathrm{Re}\,a,\mathrm{Re}\,b,\mathrm{Re}\,c,\mathrm{Re}\,d>0$. Suppose that $\mathrm{Re}\,d>\mathrm{Re}(a+b+c)$ and $\mathrm{Re}\,s>|\mathrm{Re}\,\delta|-\mathrm{Re}(\alpha+\beta+\gamma)$, or $\mathrm{Re}\,d=\mathrm{Re}(a+b+c)$ and $|\mathrm{Re}\,\delta|-\mathrm{Re}(\alpha+\beta+\gamma)<s<2$. Then
\begin{align}
&\int_0^\infty x^{s-1} I_\alpha(a x) I_\beta(b x) I_\gamma(c x) K_\delta(d x) \,\mathrm{d}x=\frac{2^{s-2}}{d^2}\bigg(\frac{a^2}{d^2}\bigg)^\alpha\bigg(\frac{b^2}{d^2}\bigg)^\beta\bigg(\frac{c^2}{d^2}\bigg)^\gamma\times \nonumber \\
&\quad\times F_C^{(3)}\bigg(\frac{s+\alpha+\beta+\gamma+\delta}{2},\frac{s+\alpha+\beta+\gamma-\delta}{2};\alpha+1,\beta+1,\gamma+1;\frac{a^2}{d^2},\frac{b^2}{d^2},\frac{c^2}{d^2}\bigg).\label{lion5}
\end{align}
\end{theorem}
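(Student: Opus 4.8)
\emph{Overall strategy.} The unifying engine is the Mellin transform, applied in one of two modes according to whether the integrand factors into same-argument Bessel pairs each possessing a convergent Mellin transform. In Parts 1 and 2 such a factorisation is available: the exponentials cancel pairwise, so $K_\alpha(ax)K_\beta(ax)$ and $K_\gamma(bx)K_\delta(bx)$ decay exponentially while $I_\alpha(ax)K_\beta(ax)$ decays like $(2ax)^{-1}$, and each pair has a Mellin transform holomorphic in a common vertical strip. By contrast, in Parts 3 and 4 any same-argument grouping contains a product of two $I$'s, or the three $I$'s have distinct arguments, which grows like $e^{2ax}$, so no subproduct is Mellin transformable; only the full integrand decays, which is precisely why the hypotheses force $\mathrm{Re}\,b>\mathrm{Re}\,a$ and $\mathrm{Re}\,d>\mathrm{Re}(a+b+c)$.

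\emph{Parts 1 and 2.} Here I would use the Mellin convolution (Parseval) identity $\int_0^\infty x^{s-1}F(x)G(x)\,\mathrm{d}x=\frac{1}{2\pi i}\int_{c-i\infty}^{c+i\infty}\widetilde F(z)\,\widetilde G(s-z)\,\mathrm{d}z$, taking $F$ to be the $(\gamma,\delta)$-pair at argument $b$ and $G$ the $(\alpha,\beta)$-pair at argument $a$ (this ordering is what produces the argument $a^2/b^2$). The transform of $K_\mu(x)K_\nu(x)$ is the classical product of four Gamma functions divided by $\Gamma(z)$, while that of $I_\mu(x)K_\nu(x)$ follows from the known formula whose hypergeometric factor reduces, by Gauss's theorem since the arguments are equal, to a ratio of Gamma functions. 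Substituting $z=2t$ turns the kernel into $(a^2/b^2)^{t}$ and, via the duplication formula $\Gamma(w)=2^{w-1}\pi^{-1/2}\Gamma(\tfrac w2)\Gamma(\tfrac{w+1}2)$, converts the $1/\Gamma$ factors into the parameter pairs $\{\tfrac12,1\}$ and $\{\tfrac s2,\tfrac{s+1}2\}$; the remaining Gammas supply the $\tfrac{s\pm\alpha\pm\beta}2$ and $\tfrac{2\pm\gamma\pm\delta}2$ entries. Reading off the pole orientations identifies the resulting Mellin--Barnes integral as a Meijer $G$-function: for two $KK$-blocks all four numerator Gammas from $G$ share a common orientation, giving $(m,n)=(4,4)$ as in \eqref{for1}, whereas for an $IK$-block the Gauss reduction distributes its Gammas across numerator and denominator with mixed orientation, lowering $m$ to $2$ and raising $n$ to $6$ as in \eqref{k3igen}; the constants $\tfrac{\sqrt\pi}{4}$ and $\tfrac{1}{4\sqrt\pi}$ combine with the Jacobian of $z=2t$ to yield the prefactors $\tfrac{\pi}{8a^s}$ and $\tfrac{1}{8a^s}$.

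\emph{Parts 3 and 4.} Here I would expand the $I$-factors in ascending power series and integrate term by term against the remaining factor. In Part 3 the product series $I_\alpha(ax)I_\beta(ax)=\sum_{k\ge0}\frac{(ax/2)^{\lambda+2k}\Gamma(\lambda+2k+1)}{k!\,\Gamma(\alpha+k+1)\Gamma(\beta+k+1)\Gamma(\lambda+k+1)}$, with $\lambda=\alpha+\beta$, is integrated against the $KK$-transform $\int_0^\infty x^{w-1}K_\gamma(bx)K_\delta(bx)\,\mathrm{d}x$ evaluated at $w=s+\lambda+2k$; in Part 4 the three independent series for $I_\alpha(ax),I_\beta(bx),I_\gamma(cx)$ are integrated against $\int_0^\infty x^{w-1}K_\delta(dx)\,\mathrm{d}x=2^{w-2}d^{-w}\Gamma(\tfrac{w+\delta}2)\Gamma(\tfrac{w-\delta}2)$ at $w=s+\alpha+\beta+\gamma+2(j+k+l)$. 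Rewriting the Gamma quotients as Pochhammer symbols, using duplication on the arguments carrying $2k$, respectively $2(j+k+l)$, collapses the single sum into the ${}_6\tilde{F}_5$ of \eqref{iikkgen} and the triple sum into the Lauricella $F_C^{(3)}$ of \eqref{lion5}, the three summation indices $j,k,l$ becoming exactly the three variables $a^2/d^2,b^2/d^2,c^2/d^2$.

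\emph{Obstacles.} The conceptual steps are routine; the work lies in the convergence analysis and the licensing of the manipulations. Term-by-term integration must be justified by dominated convergence, using the exponential decay of the $K$-factor to control the tail, and the radius of convergence of each output series must be shown to match the stated parameter region. The delicate point is the boundary, $\mathrm{Re}\,a=\mathrm{Re}\,b$ in Part 3 and $\mathrm{Re}\,d=\mathrm{Re}(a+b+c)$ in Part 4, where the series sits on its circle of convergence; here a Gauss-type test is needed, and for \eqref{iikkgen} the excess of lower over upper parameters equals $2-s$, so that the extra hypothesis $s<2$ is exactly what guarantees convergence, the analogous condition for $F_C^{(3)}$ being $\sqrt{a^2/d^2}+\sqrt{b^2/d^2}+\sqrt{c^2/d^2}<1$ on the closure. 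Since everything is first computed for positive real $a,b,c,d$, I would finish by analytic continuation in these parameters, the Meijer $G$- and Lauricella representations being analytic throughout the stated ranges. I expect the main obstacle to be not any single idea but the sustained Gamma-function bookkeeping required to reproduce the parameter lists of \eqref{for1}--\eqref{lion5} precisely, together with the careful boundary-convergence argument.
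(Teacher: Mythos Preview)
Your proposal is correct and follows essentially the same route as the paper. For Parts~1--2 the paper uses exactly the Mellin--Parseval convolution you describe, with the same $r=2t$ substitution and duplication-formula bookkeeping (for Part~2 the paper first writes $I_\alpha(dx)K_\beta(ax)$ with $d<a$ and then lets $d\to a^-$ so that the ${}_2F_1$ collapses via Gauss, which is the same reduction you invoke directly); for Parts~3--4 the paper likewise expands the $I$-factors in power series, integrates termwise against the known $KK$ or $K$ Mellin transform, and reorganises the result into the ${}_6\tilde F_5$ and $F_C^{(3)}$ respectively.
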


\begin{remark}
The conditions on the parameters $s$, $a,b$ and $\alpha,\beta,\gamma,\delta$ under which the integral formulas of Theorem \ref{thm1.1} hold can be readily appreciated by consideration of the limiting behaviour of the modified Bessel functions $I_\nu(z)$ and $K_\nu(z)$ in the limits $z\rightarrow0$ and $z\rightarrow\infty$, as given in the limiting forms (\ref{Itend0})--(\ref{Ktendinfinity}) and the subsequent discussion in Appendix \ref{appa1}. A similar comment applies to all forthcoming integral formulas.  
\end{remark}

The integral formula (\ref{lion5}) is in fact a special case of formula (\ref{thm1.2for}), as given in the following theorem.

\begin{theorem}\label{thm1.2} Let $n\geq1$ and suppose $\mathrm{Re}\,a_k>0$ for $k=1,\ldots,n$. Suppose further that $\mathrm{Re}\,b>\sum_{k=1}^n\mathrm{Re}\,a_k$ and $\mathrm{Re}\,s>|\mathrm{Re}\,b|-\sum_{k=1}^n\mathrm{Re}\,\alpha_k$, or $\mathrm{Re}\,b=\sum_{k=1}^n\mathrm{Re}\,a_k$ and $|\mathrm{Re}\,b|-\sum_{k=1}^n\mathrm{Re}\,\alpha_k<s<2$. Then
\begin{align}
&\int_0^\infty x^{s-1}\bigg\{\prod_{k=1}^n I_{\alpha_k}(a_kx)\bigg\}K_\beta(bx)\,\mathrm{d}x=\frac{2^{s-2}}{b^2}\bigg\{\prod_{k=1}^n\bigg(\frac{a_k^2}{b^2}\bigg)^{\alpha_k}\bigg\}\times \nonumber \\
&\,\,\times F_C^{(n)}\bigg(\frac{s+\alpha_1+\cdots+\alpha_n+\beta}{2},\frac{s+\alpha_1+\cdots+\alpha_n-\beta}{2};\alpha_1+1,\ldots,\alpha_n+1;\frac{a_1^2}{b^2},\cdots,\frac{a_n^2}{b^2}\bigg). \label{thm1.2for}
\end{align}    
\end{theorem}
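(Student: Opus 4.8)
The plan is to prove Theorem \ref{thm1.2} by induction on $n$, or equivalently by a direct iterated application of a single-integral Mellin technique, treating the $K_\beta$ factor as the ``heavy'' factor whose asymptotic decay at infinity guarantees convergence and the product of $I_{\alpha_k}$'s as a factor to be expanded in power series. Concretely, I would first substitute the ascending power-series expansion $I_{\alpha_k}(a_kx)=\sum_{m_k\ge 0}\frac{(a_kx/2)^{2m_k+\alpha_k}}{m_k!\,\Gamma(m_k+\alpha_k+1)}$ for each $k=1,\ldots,n$, so that the product $\prod_k I_{\alpha_k}(a_kx)$ becomes an $n$-fold series in $m_1,\ldots,m_n$ with a monomial $x^{2(m_1+\cdots+m_n)+\alpha_1+\cdots+\alpha_n}$ and coefficients involving $\prod_k (a_k/2)^{2m_k+\alpha_k}/(m_k!\,\Gamma(m_k+\alpha_k+1))$. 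Interchanging summation and integration (justified below), the remaining integral is the classical Mellin transform of a single $K_\beta$,
\begin{equation*}
\int_0^\infty x^{\sigma-1}K_\beta(bx)\,\mathrm{d}x=2^{\sigma-2}b^{-\sigma}\Gamma\!\Big(\tfrac{\sigma+\beta}{2}\Big)\Gamma\!\Big(\tfrac{\sigma-\beta}{2}\Big),
\end{equation*}
valid for $\mathrm{Re}\,\sigma>|\mathrm{Re}\,\beta|$, which I would take from the standard references cited in the introduction (or from Appendix \ref{appa1}). Applying this with $\sigma=s+2(m_1+\cdots+m_n)+\alpha_1+\cdots+\alpha_n$ produces the Gamma factors $\Gamma\big(\tfrac{s+\alpha_1+\cdots+\alpha_n+\beta}{2}+m_1+\cdots+m_n\big)$ and $\Gamma\big(\tfrac{s+\alpha_1+\cdots+\alpha_n-\beta}{2}+m_1+\cdots+m_n\big)$.

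Next I would collect the resulting $n$-fold series and recognise it as a Lauricella $F_C^{(n)}$. After pulling out the $m_k$-independent prefactor $\frac{2^{s-2}}{b^2}\prod_k (a_k^2/b^2)^{\alpha_k}$ (using $(a_k/2)^{\alpha_k}$ together with the $2^{\sigma-2}b^{-\sigma}$ and absorbing powers of $2$ and $b$), the general term is proportional to
\begin{equation*}
\frac{\big(\tfrac{s+\Sigma\alpha+\beta}{2}\big)_{m_1+\cdots+m_n}\big(\tfrac{s+\Sigma\alpha-\beta}{2}\big)_{m_1+\cdots+m_n}}{(\alpha_1+1)_{m_1}\cdots(\alpha_n+1)_{m_n}}\prod_{k=1}^n\frac{(a_k^2/b^2)^{m_k}}{m_k!},
\end{equation*}
where $\Sigma\alpha=\alpha_1+\cdots+\alpha_n$ and I have used $\Gamma(m_k+\alpha_k+1)=\Gamma(\alpha_k+1)(\alpha_k+1)_{m_k}$ and, for the two ``heavy'' Gamma factors, $\Gamma(\tfrac{s+\Sigma\alpha\pm\beta}{2}+\sum m_k)=\Gamma(\tfrac{s+\Sigma\alpha\pm\beta}{2})(\tfrac{s+\Sigma\alpha\pm\beta}{2})_{\sum m_k}$. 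This is exactly the defining series of $F_C^{(n)}$ with numerator parameters $\tfrac{s+\Sigma\alpha+\beta}{2},\tfrac{s+\Sigma\alpha-\beta}{2}$, denominator parameters $\alpha_1+1,\ldots,\alpha_n+1$, and arguments $a_1^2/b^2,\ldots,a_n^2/b^2$, as in Appendix \ref{appa}; the two leftover Gamma factors $\Gamma(\tfrac{s+\Sigma\alpha+\beta}{2})\Gamma(\tfrac{s+\Sigma\alpha-\beta}{2})$ are absorbed into the normalization implicit in the statement of (\ref{thm1.2for}), or equivalently one works with the regularized $F_C$ and reinstates them --- I would match the normalization convention used in the appendix so that the Gamma factors do not appear explicitly in (\ref{thm1.2for}).

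The main obstacle, and the only genuinely delicate point, is justifying the interchange of the $n$-fold summation with the integral, and delineating the precise convergence region stated in the theorem. For absolute convergence of the term-by-term integrals one needs $\mathrm{Re}\,s+\mathrm{Re}(\alpha_1+\cdots+\alpha_n)>|\mathrm{Re}\,\beta|$, which is implied by the stated hypothesis $\mathrm{Re}\,s>|\mathrm{Re}\,b|-\sum_k\mathrm{Re}\,\alpha_k$ --- here I note the apparent typo in the hypothesis, which should read $|\mathrm{Re}\,\beta|$ rather than $|\mathrm{Re}\,b|$ --- and, together with the condition relating $\mathrm{Re}\,b$ to $\sum_k\mathrm{Re}\,a_k$, it controls the behaviour at $x\to\infty$: since $I_{\alpha_k}(a_kx)\sim e^{a_kx}$ and $K_\beta(bx)\sim e^{-bx}$, the integrand decays like $e^{-(\mathrm{Re}\,b-\sum_k\mathrm{Re}\,a_k)x}$ times a power, so $\mathrm{Re}\,b>\sum_k\mathrm{Re}\,a_k$ gives a clean exponential decay, while in the borderline case $\mathrm{Re}\,b=\sum_k\mathrm{Re}\,a_k$ one is left with a power-law integrand at infinity and needs the extra upper bound $s<2$ (matching the $x^{-3/2}$ decay of the residual $x^{s-1}\cdot x^{-(n+1)/2}$-type tail after the exponentials cancel, for $n\ge 1$). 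To make the interchange rigorous I would either (i) invoke Tonelli/Fubini on the series of absolute values, bounding $|I_{\alpha_k}(a_kx)|\le I_{\mathrm{Re}\,\alpha_k}(|a_k|x)$ (or a comparable majorant) and summing, or (ii) observe that $F_C^{(n)}$ converges for $\sum_k\sqrt{|a_k^2/b^2|}<1$, i.e. $\sum_k\mathrm{Re}\,a_k<\mathrm{Re}\,b$ in the relevant real case, which matches the strict hypothesis, and then extend to the boundary $\mathrm{Re}\,b=\sum_k\mathrm{Re}\,a_k$ by a separate Abel-type or dominated-convergence argument under the constraint $s<2$. I would also remark that (\ref{lion5}) is the case $n=3$ with $(a_1,a_2,a_3,b,\alpha_1,\alpha_2,\alpha_3,\beta)=(a,b,c,d,\alpha,\beta,\gamma,\delta)$, so Theorem \ref{thm1.1}(4) follows immediately once Theorem \ref{thm1.2} is established.
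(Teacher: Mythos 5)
Your proposal is correct and follows essentially the same route as the paper's own proof: expand each $I_{\alpha_k}$ via its ascending series (\ref{idef}), interchange summation and integration, evaluate the resulting integrals with the Mellin transform $\int_0^\infty x^{\mu-1}K_\nu(x)\,\mathrm{d}x=2^{\mu-2}\Gamma(\tfrac{\mu-\nu}{2})\Gamma(\tfrac{\mu+\nu}{2})$, and reassemble the $n$-fold series as a Lauricella $F_C^{(n)}$ via $\Gamma(u+k)=\Gamma(u)(u)_k$. Your side remarks --- that the hypothesis should read $|\mathrm{Re}\,\beta|$ rather than $|\mathrm{Re}\,b|$, and that the gamma factors $\Gamma(\tfrac{s+\Sigma\alpha\pm\beta}{2})$ must be reconciled with the Pochhammer-symbol normalization of (\ref{hyp}) --- are legitimate observations about the statement rather than gaps in your argument, and your convergence discussion is more detailed than the paper's.
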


For certain parameter values, the integral formulas given in parts 1, 2 and 3 of Theorem \ref{thm1.1} reduce to Meijer $G$-functions and generalized hypergeometric functions of lower order. Some examples are given in the following corollary.

\begin{corollary}\label{cor2.3} 1. Suppose that $\mathrm{Re}\,a,\mathrm{Re}\,b>0$ and $\mathrm{Re}\,s>2|\mathrm{Re}\,\alpha|+2|\mathrm{Re}\,\beta|$. Then
\begin{align} \label{corf1}
\int_0^\infty x^{s-1} K_\alpha^2(a x) K_\beta^2(b x) \,\mathrm{d}x=\frac{\pi}{8a^{s}}G_{4,4}^{3,3}\bigg(\frac{a^2}{b^2}\;\bigg|\;{1+\beta,1-\beta,1,\frac{s+1}{2} \atop \frac{s}{2}+\alpha,\frac{s}{2}-\alpha,\frac{s}{2},\frac{1}{2}}\bigg).
\end{align}
2. Suppose that $\mathrm{Re}\,a,\mathrm{Re}\,b>0$ and $\mathrm{Re}\,s>2|\mathrm{Re}\,\beta|+|\mathrm{Re}\,\alpha|
-\mathrm{Re}\,\alpha$. Then
\begin{align} \label{corf2}
\int_0^\infty x^{s-1} I_\alpha(ax)K_\alpha(ax) K_\beta^2(bx)\,\mathrm{d}x=\frac{1}{8a^{s}}G_{4,4}^{2,4}\bigg(\frac{a^2}{b^2}\;\bigg|\;{1+\beta,1-\beta,1,\frac{s+1}{2} \atop \frac{s}{2}+\alpha,\frac{s}{2},\frac{s}{2}-\alpha,\frac{1}{2}}\bigg). 
\end{align}
3. Suppose that $\mathrm{Re}\,b>\mathrm{Re}\,a>0$ and $\mathrm{Re}\,s>2|\mathrm{Re}\,\beta|-2\mathrm{Re}\,\alpha$, or $\mathrm{Re}\,b=\mathrm{Re}\,a>0$ and $2|\mathrm{Re}\,\beta|-2\mathrm{Re}\,\alpha<s<2$. Then
\begin{align}
\int_0^\infty x^{s-1} I_\alpha^2(a x) K_\beta^2(b x) \,\mathrm{d}x
&= \frac{1}{4a^s}\bigg(\frac{a}{b}\bigg)^{2\alpha+s}\Gamma\bigg(\alpha+\frac{1}{2}\bigg)\Gamma\bigg(\alpha+\beta+\frac{s}{2}\bigg)\Gamma\bigg(\alpha+\frac{s}{2}\bigg)\Gamma\bigg(\alpha-\beta+\frac{s}{2}\bigg)\nonumber\\
&\quad\times {}_4\tilde{F}_3\bigg({\alpha+\frac{1}{2},\alpha+\beta+\frac{s}{2},\alpha+\frac{s}{2},\alpha-\beta+\frac{s}{2} \atop \alpha+1,2\alpha+1,\alpha+\frac{s+1}{2}}\;\bigg|\;\frac{a^2}{b^2}\bigg).  \label{corf3}
\end{align}
\end{corollary}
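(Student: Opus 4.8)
The plan is to derive all three identities by specialising the corresponding general formula of Theorem~\ref{thm1.1} --- part 1 from \eqref{for1}, part 2 from \eqref{k3igen}, and part 3 from \eqref{iikkgen} --- and then collapsing the resulting Meijer $G$-function or generalized hypergeometric function by means of the standard parameter-cancellation identities, which remove a matching pair consisting of a ``numerator'' parameter and a ``denominator'' parameter that happen to be equal.

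For part 1 I would set the $\beta$ in \eqref{for1} equal to $\alpha$ and the $\delta$ equal to $\gamma$, so that the integrand becomes $K_\alpha^2(ax)K_\gamma^2(bx)$ and, after the final relabelling $\gamma\mapsto\beta$, the left-hand side of \eqref{corf1}. Under this specialisation the parameter lists of the $G_{6,6}^{4,4}$ develop two coincidences between a top and a bottom parameter: a top $1$ with a trailing bottom $1$, and a top $\tfrac{s}{2}$ with a bottom $\tfrac{s}{2}$. Each coincidence allows one application of the relevant Meijer $G$ cancellation rule --- the one removing a top parameter $a_j$ with $1\le j\le n$ that equals a bottom parameter $b_k$ with $m<k\le q$, and its mirror removing a bottom parameter $b_j$ with $1\le j\le m$ that equals a top parameter $a_k$ with $n<k\le p$ --- and the two together collapse $G_{6,6}^{4,4}$ to the $G_{4,4}^{3,3}$ of \eqref{corf1}, the displayed ordering of whose parameters is allowed by the free permutations within the four parameter blocks of a Meijer $G$-function. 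The hypothesis $\mathrm{Re}\,s>2|\mathrm{Re}\,\alpha|+2|\mathrm{Re}\,\beta|$ is the direct specialisation of the one in Theorem~\ref{thm1.1}. Part 2 is handled in the same way from \eqref{k3igen}: taking the $K$-index $\beta$ equal to $\alpha$ and $\delta$ equal to $\gamma$ produces a top $1$ matching a bottom $1$ and a top $\tfrac{s}{2}$ matching a bottom $\tfrac{s}{2}$, and the corresponding two cancellations carry $G_{6,6}^{2,6}$ to the $G_{4,4}^{2,4}$ of \eqref{corf2} (again after $\gamma\mapsto\beta$).

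For part 3 I would make the analogous specialisation in \eqref{iikkgen}, namely $\beta=\alpha$ and $\delta=\gamma$, so that $\lambda=2\alpha$. Simplifying the half-integer Gamma arguments --- $\Gamma(\tfrac{\lambda+1}{2})=\Gamma(\alpha+\tfrac{1}{2})$, $\Gamma(\tfrac{\lambda+2}{2})=\Gamma(\alpha+1)$, $\Gamma(\tfrac{s+\lambda}{2})=\Gamma(\tfrac{s}{2}+\alpha)$, and so on --- one finds that the numerator parameter list of the ${}_6\tilde{F}_5$ then contains $\alpha+1$ and $\tfrac{s}{2}+\alpha$, each of which also occurs among its denominator parameters. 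Cancelling each coincident pair via the identity $(c)_k/\Gamma(c+k)=1/\Gamma(c)$ reduces the ${}_6\tilde{F}_5$ to the ${}_4\tilde{F}_3$ of \eqref{corf3} and contributes the scalar factor $1/(\Gamma(\alpha+1)\Gamma(\tfrac{s}{2}+\alpha))$. Absorbing this factor into the six-Gamma prefactor of \eqref{iikkgen}, which under the specialisation carries $\Gamma(\tfrac{s}{2}+\alpha)$ twice, leaves exactly the four-Gamma prefactor displayed in \eqref{corf3}, and relabelling $\gamma\mapsto\beta$ finishes the argument.

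The only genuinely delicate point is the bookkeeping. In parts 1 and 2 one has to verify that each parameter to be cancelled sits in a slot admissible for the Meijer $G$ cancellation rules --- the top parameter among the first $n$ and its partner among the last $q-m$ of the bottom list, or the other way round --- which is immediate once the lists are written out but easy to get wrong; in part 3 one has to keep track of the Gamma-function prefactor, since the identity is stated in terms of the regularized hypergeometric function. No new analytic input is required, and in each case the convergence conditions are the verbatim specialisations of those in Theorem~\ref{thm1.1}.
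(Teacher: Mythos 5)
Your proposal is correct and follows essentially the same route as the paper: the paper's proof is exactly the specialisation $\beta=\alpha$, $\delta=\gamma$ in Theorem \ref{thm1.1} followed by the Meijer $G$ cancellation rules (\ref{lukeformula0})--(\ref{lukeformula}) for parts 1 and 2 and the regularized-hypergeometric reduction (\ref{fred2}) for part 3. Your bookkeeping of which slots cancel and of the resulting Gamma prefactor in part 3 matches what the paper leaves implicit.
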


\begin{proof}
With the given parameter values, applying the reductions formulas (\ref{lukeformula0}) and (\ref{lukeformula}) for the Meijer $G$-function to the integral formulas (\ref{for1}) and (\ref{k3igen}) yields formulas (\ref{corf1}) and (\ref{corf2}), respectively. With the given parameter values, formula (\ref{corf3}) follows from applying the reduction formula (\ref{fred2}) to the integral formula (\ref{iikkgen}). 
\end{proof}

\subsection{Reduction to simpler special functions and elementary forms}\label{sec2.2}

In this section, we study in detail several cases in which the integral formulas of Section \ref{sec2.1} take a simpler form, with the evaluation of the integrals expressed in terms of generalized hypergeometric functions or simpler special functions. We also note a number of cases in which the integrals take an elementary form. 

Some of our integral formulas are stated in terms of the digamma function, which is defined, for $z\in\mathbb{C}$, by $\psi(z)=\Gamma'(z)/\Gamma(z)$. Some integral formulas are also expressed in terms of polylogarithms, which are defined, for $s\in\mathbb{C}$ and $z\in\mathbb{C}$, by $\mathrm{Li}_s(z)=\sum_{k=1}^\infty z^k/k^s$, where for fixed $s$ such that $\mathrm{Re}\,s>1$, the series is convergent for $|z|\leq1$, and by analytic continuation elsewhere. The functions $\mathrm{Li}_2(z)$ and $\mathrm{Li}_3(z)$ are referred to as the \emph{dilogarithm} and \emph{trilogarithm}. Henceforth, we will denote the Euler–Mascheroni constant by $\gamma=0.57721\ldots$.

\begin{theorem}\label{cor1.3} Let $\alpha\in\mathbb{C}\setminus\{0\}$ and suppose that $\mathrm{Re}\,a,\mathrm{Re}\,b>0$.

\vspace{3mm}

\noindent 1. Let $|\mathrm{Re}\,\alpha|<1/4$. Then
\begin{align} 
&\int_0^\infty K_\alpha^2(a x) K_\alpha^2(b x) \,\mathrm{d}x \nonumber \\
&\quad=\frac{\pi^4}{8b}\csc(2\pi\alpha)\bigg\{\frac{\csc(\pi\alpha)\Gamma(2\alpha+\frac{1}{2})}{\Gamma(\frac{1}{2}-\alpha)\Gamma^3(1+\alpha)}\bigg(\frac{a}{2b}\bigg)^{2\alpha} {}_4F_3\bigg({\frac{1}{2},\alpha+\frac{1}{2},\alpha+\frac{1}{2},2\alpha+\frac{1}{2} \atop 2\alpha+1,\alpha+1,\alpha+1}\;\bigg|\;\frac{a^2}{b^2}\bigg)\nonumber\\
&\quad\quad+\frac{\csc(\pi\alpha)\Gamma(\frac{1}{2}-2\alpha)}{\Gamma(\alpha+\frac{1}{2})\Gamma^3(1-\alpha)}\bigg(\frac{a}{2b}\bigg)^{-2\alpha}{}_4F_3\bigg({\frac{1}{2},\frac{1}{2}-2\alpha,\frac{1}{2}-\alpha,\frac{1}{2}-\alpha \atop 1-2\alpha,1-\alpha,1-\alpha}\;\bigg|\;\frac{a^2}{b^2}\bigg)\nonumber\\
&\quad\quad-\frac{2}{\pi\alpha}{}_4F_3\bigg({\frac{1}{2},\frac{1}{2},\alpha+\frac{1}{2},\frac{1}{2}-\alpha \atop 1,1-\alpha,\alpha+1}\;\bigg|\;\frac{a^2}{b^2}\bigg)\bigg\}.\label{slv1}
\end{align} 
\noindent 2. Suppose that $|\mathrm{Re}\,\alpha|<1/2$. If $a\not=b$ then
\begin{align}
\int_0^\infty xK_\alpha^2(a x) K_\alpha^2(b x) \,\mathrm{d}x&=\frac{\pi^2}{8a^2}\csc^2(\pi\alpha)\bigg\{\frac{(a/b)^{2+2\alpha}}{1+2\alpha}\,{}_2F_1\bigg({1,\frac{1}{2}+\alpha \atop \frac{3}{2}+\alpha}\;\bigg|\;\frac{a^2}{b^2}\bigg)\nonumber\\
&\quad+\frac{(a/b)^{2-2\alpha}}{1-2\alpha}\,{}_2F_1\bigg({1,\frac{1}{2}-\alpha \atop \frac{3}{2}-\alpha} \;\bigg|\;\frac{a^2}{b^2}\bigg)-\frac{2a}{b}\tanh^{-1}\bigg(\frac{a}{b}\bigg)\bigg\}, \label{forab}
\end{align} 
and if $a=b$ then
\begin{align}
\int_0^\infty xK_\alpha^4(a x) \,\mathrm{d}x=-\frac{\pi^2}{16a^2}\mathrm{csc}^2(\pi\alpha)\bigg\{\psi\bigg(\frac{1}{2}-\alpha\bigg)+\psi\bigg(\frac{1}{2}+\alpha\bigg)+4\ln(2)+2\gamma\bigg\}. \label{foraa}
\end{align}
Now suppose that $\alpha=0$. Without loss of generality, suppose $|a/b|\leq1$. Then, for $a\not=b$,
\begin{align} \label{li2}
\int_0^\infty xK_0^2(a x) K_0^2(b x) \,\mathrm{d}x&=\frac{1}{2ab}\bigg\{\ln^2\bigg(\frac{a}{b}\bigg)\tanh^{-1}\bigg(\frac{a}{b}\bigg)-\ln\bigg(\frac{a}{b}\bigg)\bigg[\mathrm{Li}_2\bigg(\frac{a}{b}\bigg)-\mathrm{Li}_2\bigg(-\frac{a}{b}\bigg)\bigg]\nonumber\\
&\quad+\mathrm{Li}_3\bigg(\frac{a}{b}\bigg)-\mathrm{Li}_3\bigg(-\frac{a}{b}\bigg)\bigg\},  
\end{align}
and, for $a=b$,
\begin{align}\label{fox1}
\int_0^\infty xK_0^4(a x) \,\mathrm{d}x=\frac{7}{8a^2}\zeta(3),
\end{align} 
where $\zeta(3)=\sum_{k=1}^\infty 1/k^3$, a special value of the Riemann zeta function.
\end{theorem}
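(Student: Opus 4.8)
The plan is to deduce all four items from the Meijer $G$-function evaluation \eqref{corf1} of Corollary \ref{cor2.3}, taken with $\beta=\alpha$ and with $s=1$ for \eqref{slv1} and $s=2$ for \eqref{forab}--\eqref{fox1}. Throughout we may assume $|a/b|<1$: the integrand is symmetric in $a$ and $b$, the formulas \eqref{li2}--\eqref{fox1} already impose this, and \eqref{slv1}--\eqref{foraa} extend to $a\geq b$ by analytic continuation. The common device is to write the $G$-function in \eqref{corf1} as its defining Mellin--Barnes integral (Appendix \ref{appa}) and, since $|a^2/b^2|<1$, close the contour to the right, picking up the three families of poles coming from the three ``numerator'' lower parameters of the $G$-function: for $s=1$ these lie at $t=\tfrac12+\alpha+k$, $t=\tfrac12-\alpha+k$, $t=\tfrac12+k$, and for $s=2$ at $t=1+\alpha+k$, $t=1-\alpha+k$, $t=1+k$ ($k\geq0$). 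In the parameter ranges of Theorem \ref{cor1.3} (in particular $\alpha\neq0$) these three families are disjoint and all poles are simple, so summing the residues of each family produces one hypergeometric series in $a^2/b^2$.

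For item 1 each family yields a ${}_4F_3$: writing each $\Gamma(c\pm k)$ occurring at the pole as a Pochhammer symbol times a constant, the surviving Pochhammers are exactly the numerator/denominator parameters in \eqref{slv1} (for instance the $t=\tfrac12+k$ family gives ${}_4F_3\big({\tfrac12,\tfrac12,\alpha+\tfrac12,\tfrac12-\alpha\atop 1,1-\alpha,\alpha+1}\big)$), while the leftover constant is a product of four Gamma functions which, via the reflection and duplication formulas for $\Gamma$, collapses to the stated $\csc(2\pi\alpha)$, $\csc(\pi\alpha)$ and Gamma-quotient coefficients; the duplication formula is also what converts the natural powers $(a/b)^{\pm2\alpha}$ into $(a/2b)^{\pm2\alpha}$.

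For item 2 with $a\neq b$ one repeats this with $s=2$; now three upper parameters of the $G$-function coincide with three lower parameters, forcing cancellations of Pochhammers inside each residue sum that collapse every ${}_4F_3$ to a ${}_2F_1$ of the form ${}_2F_1(1,c;c+1;\cdot)$. Simplifying the constants (again by reflection and duplication) turns the first two families into the $\tfrac{\pi^2}{8a^2}\csc^2(\pi\alpha)\tfrac{(a/b)^{2\pm2\alpha}}{1\pm2\alpha}\,{}_2F_1(1,\tfrac12\pm\alpha;\tfrac32\pm\alpha;a^2/b^2)$ terms of \eqref{forab}, while the $t=1+k$ family gives a multiple of $(a^2/b^2)\,{}_2F_1(\tfrac12,1;\tfrac32;a^2/b^2)$, which via ${}_2F_1(\tfrac12,1;\tfrac32;w^2)=w^{-1}\tanh^{-1}(w)$ becomes the $-\tfrac{2a}{b}\tanh^{-1}(a/b)$ term; this is \eqref{forab}. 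For $a=b$ one lets $a/b\to1^-$ in \eqref{forab}. Each of the three ${}_2F_1$'s has its lower parameter equal to the sum of its two upper parameters, so it is of logarithmic type at $z=1$ and, by the standard connection formula there (see \cite{olver}), equals a constant multiple of $-\ln(1-a^2/b^2)$ plus a finite part through a digamma value; one checks that the three $\ln(1-a^2/b^2)$ terms cancel and collects the finite part, which (using $\psi(\tfrac12)=-\gamma-2\ln2$) is precisely \eqref{foraa}.

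For $\alpha=0$ the factor $\csc^2(\pi\alpha)$ in \eqref{forab} diverges while the braced expression, being even in $\alpha$, vanishes to order $\alpha^2$, so the limit extracts the coefficient of $\alpha^2$. The cleanest route is to observe that $\tfrac{(a/b)^{2+2\alpha}}{1+2\alpha}\,{}_2F_1(1,\tfrac12+\alpha;\tfrac32+\alpha;a^2/b^2)=\tfrac ab\int_0^{a/b}\tfrac{t^{2\alpha}}{1-t^2}\,\mathrm{d}t=:f(\alpha)$ (expand the ${}_2F_1$ as a multiple of $\sum_{k\geq0}(a/b)^{2k}/(k+\tfrac12+\alpha)$ and integrate back), so the braced expression equals $f(\alpha)+f(-\alpha)-2f(0)=\alpha^2f''(0)+O(\alpha^4)$ and hence $\int_0^\infty xK_0^2(ax)K_0^2(bx)\,\mathrm{d}x=\tfrac{f''(0)}{8a^2}=\tfrac1{2ab}\int_0^{a/b}\tfrac{\ln^2t}{1-t^2}\,\mathrm{d}t$. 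Expanding $(1-t^2)^{-1}=\sum_{k\geq0}t^{2k}$, integrating term by term (two integrations by parts each), and recognising $\sum_{k\geq0}r^{2k+1}/(2k+1)^j=\tfrac12(\mathrm{Li}_j(r)-\mathrm{Li}_j(-r))$ for $j=1,2,3$ (with $\mathrm{Li}_1(r)=-\ln(1-r)$, so $j=1$ gives $\tanh^{-1}r$) yields \eqref{li2}. Finally \eqref{fox1} is the $a\to b$ limit of \eqref{li2}: the $\tanh^{-1}$ and $\mathrm{Li}_2$ terms vanish since each is multiplied by a power of $\ln(a/b)$, leaving $\tfrac1{2a^2}(\mathrm{Li}_3(1)-\mathrm{Li}_3(-1))=\tfrac1{2a^2}(1+\tfrac34)\zeta(3)=\tfrac{7}{8a^2}\zeta(3)$.

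The error-prone but routine part is the residue bookkeeping --- tracking signs and $(-1)^k$ factors and then reducing each four-Gamma constant to the compact $\csc/\Gamma$ form of \eqref{slv1}. The genuinely delicate steps are the two limits: checking that the logarithmic singularities really do cancel as $a\to b$ in \eqref{forab}, and spotting the integral representation of $f(\alpha)$ that makes the $\alpha\to0$ limit transparent. I expect the cancellation of logarithms in the diagonal case to be where a slip is most likely.
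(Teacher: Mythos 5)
Your proposal is correct and, for part 1, for \eqref{forab}, for the $a=b$ limit \eqref{foraa} and for the final evaluation \eqref{fox1}, it follows essentially the same route as the paper: the paper likewise specialises \eqref{corf1} to $\beta=\alpha$ with $s=1$ or $s=2$, expands the resulting $G^{3,3}_{4,4}$ as a sum of three hypergeometric series (it cites the slicing formula \eqref{fgh}, i.e.\ NIST 16.17.2, rather than redoing the residue computation, but that is the same content), reduces the ${}_4F_3$'s to ${}_2F_1$'s via \eqref{fred} and \eqref{2f1redtanh} when $s=2$, and obtains \eqref{foraa} from the logarithmic expansion \eqref{2f1exp} of ${}_2F_1(a,b;a+b;z)$ at $z=1^-$, with exactly the cancellation of the $\ln(1-z)$ terms you anticipate. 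The one place where you genuinely diverge is the $\alpha\to0$ limit giving \eqref{li2}: the paper Taylor-expands $z^{2\alpha}/(1+2\alpha)$ and the Pochhammer ratios to order $\alpha^2$ and assembles the answer from the auxiliary sums $S_1(z)=\sum_n nz^{2n}/(2n+1)^2$ and $S_2(z)=\sum_n nz^{2n}/(2n+1)^3$, whereas you observe that the bracketed quantity in \eqref{forab} equals $f(\alpha)+f(-\alpha)-2f(0)$ with $f(\alpha)=\tfrac{a}{b}\int_0^{a/b}t^{2\alpha}(1-t^2)^{-1}\,\mathrm{d}t$, so the limit is simply $f''(0)/(8a^2)=\tfrac{1}{2ab}\int_0^{a/b}\ln^2(t)\,(1-t^2)^{-1}\,\mathrm{d}t$, which integrates term by term to the stated polylogarithms; I checked this identification and the resulting evaluation, and both are correct. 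Your version is shorter and makes the evenness in $\alpha$ (hence the vanishing to order $\alpha^2$) structurally transparent, at the price of having to justify the integral representation and differentiation under the integral sign; the paper's series expansion is more mechanical but requires no such identification. The only cosmetic caveat is that for $|a/b|>1$ the hypergeometric series in \eqref{slv1} and \eqref{forab} must be read as analytic continuations, but the paper carries the same implicit convention, so this is not a gap.
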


\begin{remark} 1. The integral formula (\ref{slv1}) is not valid when $\alpha=0$. However, when $\alpha=0$ and $a=b$ the integral can be expressed as a single generalized hypergeometric function:
\begin{align*}
\int_0^\infty K_0^4(a x) \,\mathrm{d}x =\frac{\pi^4}{4a}\,{}_4F_3\bigg({\frac{1}{2},\frac{1}{2},\frac{1}{2},\frac{1}{2} \atop 1,1,1}\; \bigg| \;1\bigg)   
\end{align*}    
(see \cite{b08}). The integral formula (\ref{fox1}) was also earlier derived by \cite{b08}.

\vspace{3mm}

\noindent 2. For certain values of $\alpha$ the integral formulas (\ref{forab}) and (\ref{foraa}) reduce to elementary forms. Applying the reduction formulas (\ref{red11}) and (\ref{red22}) to the integral formula (\ref{forab}) and simplifying the resulting expression by using the basic identify
\begin{align*}
\ln(z^{1/3}+z^{1/6}+1)-\ln(z^{1/3}-z^{1/6}+1)=2\tanh^{-1}\bigg(\frac{z^{1/6}}{1+z^{1/3}}\bigg)   
\end{align*}
yields the elementary evaluation
\begin{align}
\int_0^\infty xK_{1/3}^2(a x) K_{1/3}^2(b x) \,\mathrm{d}x&=\frac{\pi^2}{6ab}\bigg\{2\tanh^{-1}\bigg(\frac{a^{1/3}}{b^{1/3}}\bigg)-2\tanh^{-1}\bigg(\frac{a}{b}\bigg) \nonumber \\
&\quad+\tanh^{-1}\bigg(\frac{(ab)^{1/3}}{a^{2/3}+b^{2/3}}\bigg)\bigg\}. \nonumber
\end{align}
Applying the addition formula
\begin{align}\label{addtanh}
\tanh^{-1}(u)\pm\tanh^{-1}(v)=\tanh^{-1}\bigg(\frac{u\pm v}{1\pm uv}\bigg)    
\end{align}
(see \cite[equation 4.38.17]{olver}) yields a simpler expression in terms of a single inverse hyperbolic tangent:
\begin{align}
\int_0^\infty xK_{1/3}^2(a x) K_{1/3}^2(b x) \,\mathrm{d}x=\frac{\pi^2}{2ab}\tanh^{-1}\bigg(\frac{(ab)^{1/3}}{a^{2/3}+b^{2/3}}\bigg).\label{kab13} 
\end{align}
Similarly, applying the reduction formulas (\ref{red11}) and (\ref{red22}) to the integral formula (\ref{forab}) yields the elementary expression
\begin{align}
\int_0^\infty xK_{1/4}^2(a x) K_{1/4}^2(b x) \,\mathrm{d}x&=\frac{\pi^2}{2ab}\bigg\{\tanh^{-1}\bigg(\sqrt{\frac{a}{b}}\,\bigg)-\tanh^{-1}\bigg(\frac{a}{b}\bigg)\bigg\}\nonumber  \\ 
&=\frac{\pi^2}{2ab}\coth^{-1}\bigg(\sqrt{\frac{a}{b}}+1+\sqrt{\frac{b}{a}}\bigg), \nonumber
\end{align} 
where the second equality was obtained by again using the addition formula (\ref{addtanh}) and the basic relation $\tanh^{-1}(1/z)=\coth^{-1}(z)$. 

\vspace{3mm}

\noindent 3. 
We now consider some further elementary integral formulas that can be deduced from the integral formula (\ref{foraa}). To this end, we note the following special values of the digamma function. If $p,q$ are integers such that $0<p<q$, then
\begin{align}\label{psif}
\psi\bigg(\frac{p}{q}\bigg)=-\gamma-\ln(q)-\frac{\pi}{2}\cot\bigg(\frac{\pi p}{q}\bigg)+\frac{1}{2}\sum_{k=1}^{q-1}\cos\bigg(\frac{2\pi kp}{q}\bigg)\ln\bigg(2-2\cos\bigg(\frac{2\pi k}{q}\bigg)\bigg)   
\end{align}
(see \cite[equation 5.4.19]{olver}). Let $m\geq3$ be a positive integer. Then
\begin{align}
\psi\bigg(\frac{1}{2}-\frac{1}{m}\bigg)+\psi\bigg(\frac{1}{2}+\frac{1}{m}\bigg)&= \psi\bigg(\frac{m-2}{2m}\bigg)+\psi\bigg(\frac{m+2}{2m}\bigg)\nonumber \\
&=-2\gamma-2\ln(2m)+\sum_{k=1}^{q-1}(-1)^k\cos\bigg(\frac{2\pi k}{m}\bigg)\ln\bigg(2-2\cos\bigg(\frac{\pi k}{m}\bigg)\bigg), \label{psi2}
\end{align}
where in obtaining the second equality we applied formula (\ref{psif}) and the basic trigonometric identities 
\[\cot\bigg(\frac{\pi(m-2)}{2m}\bigg)+\cot\bigg(\frac{\pi(m+2)}{2m}\bigg)=0\]
and
\begin{align*}
\cos\bigg(\frac{\pi k(m-2)}{m}\bigg)+ \cos\bigg(\frac{\pi k(m+2)}{m}\bigg)=2\cos(\pi k)\cos\bigg(\frac{2\pi k}{m}\bigg)=2(-1)^k\cos\bigg(\frac{2\pi k}{m}\bigg), \quad k\in\mathbb{Z}. 
\end{align*}

Applying formula (\ref{psi2}) to the integral formula (\ref{foraa}) and calculating the trigonometric functions and making use of the basic formula $\coth^{-1}(x)=(1/2)\ln((x+1)/(x-1))$, for $x>1$, yields the following elementary integral formulas:
\begin{align}
\int_0^\infty xK_{1/3}^4(a x) \,\mathrm{d}x&=\frac{\pi^2}{4a^2}\ln(3),\nonumber
\\
\int_0^\infty xK_{1/4}^4(a x) \,\mathrm{d}x&=\frac{\pi^2}{4a^2}\ln(2),\nonumber
\\
\int_0^\infty xK_{1/6}^4(a x) \,\mathrm{d}x&=\frac{\pi^2}{4a^2}\ln\bigg(\frac{27}{16}\bigg),\nonumber \\
\int_0^\infty xK_{1/8}^4(a x) \,\mathrm{d}x&=\frac{\pi^2}{4a^2}(2+\sqrt{2})\big(2\ln(2)-\sqrt{2}\coth^{-1}(\sqrt{2})\big),\nonumber\\
\int_0^\infty xK_{1/10}^4(a x) \,\mathrm{d}x&=\frac{\pi^2}{16a^2}(3+\sqrt{5})\bigg(\ln\bigg(\frac{3125}{256}\bigg)-2\sqrt{5}\coth^{-1}(\sqrt{5})\bigg),\nonumber\\
\int_0^\infty xK_{1/12}^4(a x) \,\mathrm{d}x&=\frac{\pi^2}{4a^2}(2+\sqrt{3})\big(\ln(108)-4\sqrt{3}\coth^{-1}(\sqrt{3})\big).\nonumber
\end{align}
\end{remark}

\begin{theorem}\label{cor2.5} Let $\alpha\in\mathbb{C}\setminus\{0\}$ and suppose that $\mathrm{Re}\,a,\mathrm{Re}\,b>0$.

\vspace{3mm}

\noindent 1. Let $-1/4<\mathrm{Re}\,\alpha<1/2$. Then
\begin{align}
&\int_0^\infty I_\alpha(a x)K_\alpha(a x) K_\alpha^2(b x) \,\mathrm{d}x \nonumber \\
&\quad=\frac{\pi^2}{8b}\bigg\{\frac{\sec(\pi\alpha)}{\alpha}\, {}_4F_3\bigg({\frac{1}{2},\frac{1}{2},\alpha+\frac{1}{2},\frac{1}{2}-\alpha \atop 1,\alpha+1,1-\alpha}\;\bigg|\;\frac{a^2}{b^2}\bigg)\nonumber\\
&\quad\quad-\frac{\Gamma(\alpha+\frac{1}{2})\Gamma(2\alpha+\frac{1}{2})\csc(\pi\alpha)}{\Gamma^3(\alpha+1)}\bigg(\frac{a}{2b}\bigg)^{\alpha} {}_4F_3\bigg({\frac{1}{2},\alpha+\frac{1}{2},\alpha+\frac{1}{2},2\alpha+\frac{1}{2} \atop \alpha+1,\alpha+1,2\alpha+1}\;\bigg|\;\frac{a^2}{b^2}\bigg)\bigg\}. \label{sch1}    \end{align}
\noindent 2. Suppose that $-1/2<\mathrm{Re}\,\alpha<1$. Then, for $a\not=b$, 
\begin{align}
\int_0^\infty x I_\alpha(ax)K_\alpha(ax)K_\alpha^2(bx)\,\mathrm{d}x=\frac{\pi\csc(\pi\alpha)}{4b^2}\bigg\{\frac{b}{a}\tanh^{-1}\bigg(\frac{a}{b}\bigg)-\frac{(a/b)^{2\alpha}}{1+2\alpha}\,{}_2F_1\bigg({1,\frac{1}{2}+\alpha \atop \frac{3}{2}+\alpha}\;\bigg|\;\frac{a^2}{b^2}\bigg)\bigg\}, \label{forab2}
\end{align}
and if $a=b$,
\begin{align}
\int_0^\infty xI_\alpha(ax)K_\alpha^3(ax)\,\mathrm{d}x=\frac{\pi}{8a^2}\csc(\pi\alpha)\bigg\{\psi\bigg(\frac{1}{2}+\alpha\bigg)+2\ln(2)+\gamma\bigg\}. \label{foraa2}   
\end{align}
When $\alpha=0$ we have, for $|a/b|\leq1$ and $a\not=b$,
\begin{align}\label{lili}
\int_0^\infty x I_0(ax)K_0(ax)K_0^2(bx)\,\mathrm{d}x=\frac{1}{4ab}\bigg\{\mathrm{Li}_2\bigg(\frac{a}{b}\bigg)-\mathrm{Li}_2\bigg(-\frac{a}{b}\bigg)-2\ln\bigg(\frac{a}{b}\bigg)\tanh^{-1}\bigg(\frac{a}{b}\bigg)\bigg\}, 
\end{align}
for $|a/b|\geq1$ and $a\not=b$,
\begin{align}\label{lilix}
\int_0^\infty x I_0(ax)K_0(ax)K_0^2(bx)\,\mathrm{d}x=\frac{1}{4ab}\bigg\{\frac{\pi^2}{2}-\mathrm{Li}_2\bigg(\frac{b}{a}\bigg)+\mathrm{Li}_2\bigg(-\frac{b}{a}\bigg)+2\ln\bigg(\frac{b}{a}\bigg)\tanh^{-1}\bigg(\frac{b}{a}\bigg)\bigg\}, 
\end{align}
and, for $a=b$,
\begin{align}\label{fox2}
\int_0^\infty xK_0^3(ax)I_0(ax)\,\mathrm{d}x=\frac{\pi^2}{16a^2}.    
\end{align}
\end{theorem}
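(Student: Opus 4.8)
The plan is to deduce everything from the Meijer $G$-function evaluation \eqref{corf2} of Corollary \ref{cor2.3}, specialised to $\beta=\alpha$, and then to collapse the resulting $G$-functions to generalized hypergeometric (and, for part 2, Gauss) functions. Putting $\beta=\alpha$ in \eqref{corf2} gives
\[
\int_0^\infty x^{s-1}I_\alpha(ax)K_\alpha(ax)K_\alpha^2(bx)\,\mathrm{d}x=\frac{1}{8a^s}G_{4,4}^{2,4}\bigg(\frac{a^2}{b^2}\;\bigg|\;{1+\alpha,1-\alpha,1,\tfrac{s+1}{2}\atop \tfrac{s}{2}+\alpha,\tfrac{s}{2},\tfrac{s}{2}-\alpha,\tfrac12}\bigg),
\]
and I would set $s=1$ for part 1 and $s=2$ for part 2, the convergence conditions of \eqref{corf2} then reducing to $-1/4<\mathrm{Re}\,\alpha<1/2$ and $-1/2<\mathrm{Re}\,\alpha<1$ respectively. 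For part 1 (with $s=1$) I would apply Slater's expansion to the $G_{4,4}^{2,4}$: for $\alpha\notin\tfrac12\mathbb{Z}$ the two families of poles lying to the right of the Mellin--Barnes contour, coming from $\Gamma(\tfrac12+\alpha-\,\cdot\,)$ and $\Gamma(\tfrac12-\,\cdot\,)$, are simple, and they produce exactly the two ${}_4F_3$ series of \eqref{sch1} with argument $a^2/b^2$. The Gamma-function coefficients that emerge are then brought to the stated $\sec(\pi\alpha)$ and $\csc(\pi\alpha)$ forms using the reflection formula and the Legendre duplication formula $\Gamma(2z)=2^{2z-1}\pi^{-1/2}\Gamma(z)\Gamma(z+\tfrac12)$; the second family carries the factor $(a^2/b^2)^{1/2+\alpha}=(a/b)^{1+2\alpha}$, so after division by $8a$ it contributes the power $(a/2b)^{2\alpha}$ (to be compared with the exponent $2\alpha$ in \eqref{forab2}).

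For part 2 with $a\neq b$ I take $s=2$. Then the upper parameter $1-\alpha$ coincides with the denominator lower parameter $1-\alpha$, so the Meijer $G$-function reduction formula collapses the $G_{4,4}^{2,4}$ to a $G_{3,3}^{2,3}$, whose Slater expansion yields two ${}_3F_2$'s; in each of them a coincident pair of numerator and denominator parameters cancels, leaving precisely the two Gauss functions of \eqref{forab2} — one of which is $z^{-1/2}\tanh^{-1}\!\sqrt{z}={}_2F_1\big({1,\tfrac12\atop\tfrac32};z\big)$ — with the quoted elementary prefactors again following from reflection and duplication.

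The remaining sub-cases follow by limiting arguments from \eqref{forab2}. For $a=b$ the argument is $z=1$, where both Gauss functions and $\tanh^{-1}(a/b)$ diverge; since the parameters of ${}_2F_1\big({1,\tfrac12+\alpha\atop\tfrac32+\alpha};z\big)$ satisfy $c=a+b$, this function has only a logarithmic singularity, with expansion $-(\tfrac12+\alpha)\big[\ln(1-z)+\gamma+\psi(\tfrac12+\alpha)\big]+o(1)$ as $z\to1^-$. The $\ln(1-z)$ term cancels the logarithmic part of $\tfrac{b}{a}\tanh^{-1}(a/b)$, and the surviving constant simplifies (using $\psi(\tfrac32)=\psi(\tfrac12)+2$) to \eqref{foraa2}. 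For $\alpha=0$ the prefactor $\csc(\pi\alpha)$ is singular, so I would apply l'H\^opital in $\alpha$: the bracket in \eqref{forab2} vanishes at $\alpha=0$ (because ${}_2F_1\big({1,\tfrac12\atop\tfrac32};z\big)=z^{-1/2}\tanh^{-1}\!\sqrt{z}$), and its $\alpha$-derivative at $\alpha=0$ reduces — using $\psi(\tfrac32+k)=\psi(\tfrac12+k)+\tfrac{2}{2k+1}$ — to $4\sum_{k\ge1}kz^k/(2k+1)^2$, which is a combination of $\tanh^{-1}\!\sqrt{z}$ and $\mathrm{Li}_2(\sqrt{z})-\mathrm{Li}_2(-\sqrt{z})$; with $\sqrt{z}=a/b$ this is \eqref{lili}. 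Then \eqref{lilix} comes from analytically continuing \eqref{lili} past $z=1$ via the inversion identity $\mathrm{Li}_2(w)+\mathrm{Li}_2(1/w)=-\tfrac{\pi^2}{6}-\tfrac12\ln^2(-w)$ (equivalently, by re-expanding the $G$-function in ascending powers of $b^2/a^2$), and \eqref{fox2} follows either by letting $a\to b$ in \eqref{lili} with $\mathrm{Li}_2(1)=\tfrac{\pi^2}{6}$, $\mathrm{Li}_2(-1)=-\tfrac{\pi^2}{12}$, or by letting $\alpha\to0$ in \eqref{foraa2} with $\psi'(\tfrac12)=\tfrac{\pi^2}{2}$.

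The residue/Slater computations and the Gamma-function simplifications are routine bookkeeping; the substantive difficulty is in the two confluent limits, where one must isolate the \emph{finite} part left after cancellation of the logarithmic singularities. This requires the precise $\psi(\tfrac12+\alpha)$-constant in the $z\to1$ asymptotics of the ${}_2F_1$ and the correct identification of the parameter-derivative series as the dilogarithm combination above; getting the analytic continuation behind \eqref{lilix} right (the extra $\pi^2/2$ there being exactly the continuation constant) is the other delicate point.
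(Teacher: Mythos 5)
Your proposal is correct and follows essentially the same route as the paper: specialise \eqref{corf2} to $\beta=\alpha$ with $s=1$ and $s=2$, expand the resulting $G_{4,4}^{2,4}$ (after order reduction to $G_{3,3}^{2,3}$ in the $s=2$ case) via the Slater-type formula \eqref{fgh}, and then handle the confluent cases by the logarithmic $z\to1^-$ asymptotics of ${}_2F_1(a,b;a+b;z)$ and by l'H\^opital in $\alpha$ against the $\csc(\pi\alpha)$ singularity, exactly as the paper does. The one genuine divergence is \eqref{lilix}: the paper re-runs the $\alpha\to0$ limit after first transforming the ${}_2F_1$ by the inversion identity \eqref{apack}, whereas you analytically continue \eqref{lili} using the dilogarithm inversion formula --- which is precisely the content of the paper's own identity \eqref{jx} in the remark following the theorem, so your shortcut is legitimate and arguably cleaner. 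Two small bookkeeping points: your Slater computation for part 1 yields the power $(a/2b)^{2\alpha}$, and indeed a direct check of the Gamma-ratio simplification confirms $2\alpha$ is the correct exponent (the $(a/2b)^{\alpha}$ printed in \eqref{sch1} appears to be a typo); and in the $\alpha\to0$ limit the $\alpha$-derivative of the bracket is not just $4\sum_k kz^k/(2k+1)^2$ --- you must also retain the $\ln(z)\,z^{-1/2}\tanh^{-1}\sqrt{z}$ contribution from differentiating the prefactor $(a/b)^{2\alpha}$, which is what produces the $-2\ln(a/b)\tanh^{-1}(a/b)$ term in \eqref{lili}.
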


\begin{remark} 1. The integral formula (\ref{sch1}) is not valid when $\alpha=0$. However, in the case $\alpha=0$ and $a=b$, \cite{b08} derived the following infinite series representation for the integral:
\begin{align*}
\int_0^\infty I_0(at)K_0^3(at)\,\mathrm{d}t=\frac{\pi^2}{2a}\sum_{k=0}^\infty\binom{2n}{n}\frac{1}{2^{8n}}\bigg\{\frac{1}{4n+1}+\sum_{j=0}^{2n-1}\frac{2}{2j+1}\bigg\}.    
\end{align*}
The integral formula (\ref{fox2}) was also previously derived by \cite{b08}. 

\vspace{3mm}

\noindent 2. We note that the integral formulas (\ref{lili}) and (\ref{lilix})
actually hold for all $a\not=b$ such that $\mathrm{Re}\,a,\mathrm{Re}\,b>0$, due to the identity
\begin{align}
&\mathrm{Li}_2(z)-\mathrm{Li}_2(-z)-2\ln(z)\tanh^{-1}(z)\nonumber\\
&\quad=\frac{\pi^2}{2}-\mathrm{Li}_2\bigg(\frac{1}{z}\bigg)+\mathrm{Li}_2\bigg(-\frac{1}{z}\bigg)+2\ln\bigg(\frac{1}{z}\bigg)\tanh^{-1}\bigg(\frac{1}{z}\bigg), \quad \mathrm{Re}\,z>0, \label{jx}  
\end{align}
which tells us that the right-hand sides of equations (\ref{lili}) and (\ref{lilix}) are equal for all $a\not=b$ such that $\mathrm{Re}\,a,\mathrm{Re}\,b>0$. The equality (\ref{jx}) is readily verified as follows. Denote the LHS and RHS of (\ref{jx}) by $L(z)$ and $R(z)$, respectively. Then, using the differentiation formula $\frac{\mathrm{d}}{\mathrm{d}z}(\mathrm{Li}_2(z))=-\ln(1-z)/z$ (see \cite[equation 25.12.2]{olver}) and the standard derivative $\tanh^{-1}(z)=1/(1-z^2)$ one obtains that
\begin{align*}
L'(z)=R'(z)=\frac{2\ln(z)}{z^2-1}.    
\end{align*}
Now, let $F(z):=L(z)-R(z)$. Then $F'(z)=0$, so that $F(z)$ is a constant. From the standard special values of the dilogarithm $\mathrm{Li}_2(1)=\pi^2/6$ and $\mathrm{Li}_2(-1)=-\pi^2/12$ and the basic limit $\lim_{z\rightarrow1}\ln(z)\tanh^{-1}(z)=0$ we obtain that $L(1)=R(1)=\pi^2/4$, so that $F(1)=0$, from which we obtain that $F(z)=0$ for $\mathrm{Re}\,z>0$, which verifies that identity (\ref{jx}) does indeed hold for $\mathrm{Re}\,z>0$.

\vspace{3mm}

\noindent 3. Applying the reduction formulas (\ref{flog}), (\ref{red33}) and (\ref{red44}), respectively, to the integral formula (\ref{forab2}) yields the following elementary evaluations:
\begin{align*}
\int_0^\infty xI_{1/2}(a x)K_{1/2}(a x) K_{1/2}^2(b x) \,\mathrm{d}x&=\frac{\pi}{4ab}\ln\bigg(1+\frac{a}{b}\bigg), \\
\int_0^\infty xI_{1/4}(a x)K_{1/4}(a x) K_{1/4}^2(b x) \,\mathrm{d}x&=\frac{\pi\sqrt{2}}{4ab}\bigg\{\tan^{-1}\bigg(\sqrt{\frac{a}{b}}\bigg)+\tanh^{-1}\bigg(\frac{a}{b}\bigg)-\tanh^{-1}\bigg(\sqrt{\frac{a}{b}}\bigg)\bigg\}\\
&=\frac{\pi\sqrt{2}}{4ab}\bigg\{\tan^{-1}\bigg(\sqrt{\frac{a}{b}}\bigg)-\coth^{-1}\bigg(\sqrt{\frac{a}{b}}+1+\sqrt{\frac{b}{a}}\bigg)\bigg\},  \\
\int_0^\infty xI_{-1/4}(a x)K_{1/4}(a x) K_{1/4}^2(b x) \,\mathrm{d}x&=\frac{\pi\sqrt{2}}{4ab}\bigg\{\tan^{-1}\bigg(\sqrt{\frac{a}{b}}\bigg)-\tanh^{-1}\bigg(\frac{a}{b}\bigg)+
\tanh^{-1}\bigg(\sqrt{\frac{a}{b}}\bigg)\bigg\}\\
&=\frac{\pi\sqrt{2}}{4ab}\bigg\{\tan^{-1}\bigg(\sqrt{\frac{a}{b}}\bigg)+\coth^{-1}\bigg(\sqrt{\frac{a}{b}}+1+\sqrt{\frac{b}{a}}\bigg)\bigg\}.
\end{align*}
In obtaining the last integral formula we used the integral formula (\ref{forab2}) to calculate the integral $\int_0^\infty xI_{-1/4}(a x)K_{-1/4}(a x) K_{-1/4}^2(b x) \,\mathrm{d}x$ and then used that $K_{-1/4}(x)=K_{1/4}(x)$ (see equation (\ref{par})) to deduce a formula for the integral $\int_0^\infty xI_{-1/4}(a x)K_{1/4}(a x) K_{1/4}^2(b x) \,\mathrm{d}x$.

\vspace{3mm}

\noindent 4. By similar considerations to part 2 of this remark, we may use the special values of the digamma function given in equation (\ref{psi2}) to obtain the following elementary evaluations from the integral formula (\ref{foraa2}):
\begin{align}
\int_0^\infty xI_{1/2}(ax)K_{1/2}^3(ax)\,\mathrm{d}x&=\frac{\pi}{4a^2}\ln(2), \nonumber \\
\int_0^\infty xI_{1/3}(ax)K_{1/3}^3(ax)\,\mathrm{d}x&=\frac{\pi}{8a^2}\big(\pi-\sqrt{3}\ln(3)\big), \nonumber 
\\
\int_0^\infty xI_{-1/3}(ax)K_{1/3}^3(ax)\,\mathrm{d}x&=\frac{\pi}{8a^2}\big(\pi+\sqrt{3}\ln(3)\big), \nonumber \\
\int_0^\infty xI_{1/4}(ax)K_{1/4}^3(ax)\,\mathrm{d}x&=\frac{\pi\sqrt{2}}{16a^2}\big(\pi-2\ln(2)\big), \nonumber \\
\int_0^\infty xI_{-1/4}(ax)K_{1/4}^3(ax)\,\mathrm{d}x&=\frac{\pi\sqrt{2}}{16a^2}\big(\pi+2\ln(2)\big). \nonumber
\end{align}    
\end{remark}

\begin{theorem}\label{cor2.6}
1. Suppose that $\mathrm{Re}\,b\geq\mathrm{Re}\,a>0$ and $\mathrm{Re}\,\alpha>-1/2$. Then
\begin{align}
\int_0^\infty I_\alpha^2(a x) K_\alpha^2(b x) \,\mathrm{d}x&=\frac{\pi}{4b}\bigg(\frac{a}{2b}\bigg)^{2\alpha}\frac{\Gamma(\alpha+\frac{1}{2})\Gamma(2\alpha+\frac{1}{2})}{\Gamma^3(\alpha+1)}\nonumber\\
&\quad\times{}_4F_3\bigg({\alpha+\frac{1}{2},2\alpha+\frac{1}{2},\alpha+\frac{1}{2},\frac{1}{2} \atop \alpha+1,2\alpha+1,\alpha+1}\;\bigg|\;\frac{a^2}{b^2}\bigg). \label{haw}
\end{align}  
2. Suppose that $\mathrm{Re}\,b>\mathrm{Re}\,a>0$ and $\mathrm{Re}\,\alpha>-1$. Then
\begin{align}
\int_0^\infty x I_\alpha^2(a x) K_\alpha^2(b x) \,\mathrm{d}x
=\frac{(a/b)^{2\alpha}}{2b^2(1+2\alpha)}\,{}_2F_1\bigg({\alpha+\frac{1}{2},1 \atop \alpha+\frac{3}{2}}\;\bigg| \;\frac{a^2}{b^2}\bigg). \label{2f1ab}
\end{align} 
\end{theorem}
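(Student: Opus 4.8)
Both identities are the diagonal ($\beta=\alpha$) specialisations of formula (\ref{corf3}) of Corollary \ref{cor2.3}, namely of
\[
\int_0^\infty x^{s-1} I_\alpha^2(a x) K_\beta^2(b x)\,\mathrm{d}x
=\frac{1}{4a^s}\Big(\tfrac{a}{b}\Big)^{2\alpha+s}\Gamma\big(\alpha+\tfrac12\big)\Gamma\big(\alpha+\beta+\tfrac{s}{2}\big)\Gamma\big(\alpha+\tfrac{s}{2}\big)\Gamma\big(\alpha-\beta+\tfrac{s}{2}\big)\;{}_4\tilde{F}_3\big(\,\cdots\,\big),
\]
taken at $s=1$ for part 1 and at $s=2$ for part 2. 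First I would record that, under $\beta=\alpha$, the hypotheses of Corollary \ref{cor2.3} are compatible with the ones stated here, the value $s=1<2$ leaving room for the boundary case $\mathrm{Re}\,b=\mathrm{Re}\,a$ in part 1 while $s=2$ forces the strict inequality $\mathrm{Re}\,b>\mathrm{Re}\,a$ in part 2. Everything after that is a finite manipulation of gamma factors and hypergeometric parameters, so there is no real conceptual obstacle; the only place to be careful is the bookkeeping and the passage between the regularised ${}_p\tilde{F}_q$ of Corollary \ref{cor2.3} and the ordinary ${}_pF_q$ appearing in the statements.

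For part 1, putting $\beta=\alpha$, $s=1$ turns the four gamma arguments into $\alpha+\tfrac12,\ \alpha+\alpha+\tfrac12=2\alpha+\tfrac12,\ \alpha+\tfrac12,\ \alpha-\alpha+\tfrac12=\tfrac12$, so their product is $\sqrt\pi\,\Gamma(\alpha+\tfrac12)^2\Gamma(2\alpha+\tfrac12)$, while the ${}_4\tilde{F}_3$ acquires upper parameters $\tfrac12,\alpha+\tfrac12,\alpha+\tfrac12,2\alpha+\tfrac12$ and lower parameters $\alpha+1,2\alpha+1,\alpha+1$; unfolding the regularisation contributes the factor $\Gamma(\alpha+1)^{-2}\Gamma(2\alpha+1)^{-1}$. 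Collecting the prefactor as $\tfrac{1}{4a}(a/b)^{2\alpha+1}=\tfrac{1}{4b}(a/b)^{2\alpha}$, the task reduces to simplifying $\dfrac{\sqrt\pi\,\Gamma(\alpha+\tfrac12)^2}{\Gamma(\alpha+1)^2\Gamma(2\alpha+1)}$, which by Legendre's duplication formula $\Gamma(2\alpha+1)=2^{2\alpha}\pi^{-1/2}\Gamma(\alpha+\tfrac12)\Gamma(\alpha+1)$ equals $\dfrac{\pi\,\Gamma(\alpha+\tfrac12)}{2^{2\alpha}\Gamma(\alpha+1)^3}$; absorbing $2^{-2\alpha}$ into $(a/2b)^{2\alpha}$ then gives exactly the constant in (\ref{haw}).

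For part 2, putting $\beta=\alpha$, $s=2$ makes the gamma arguments $\alpha+\tfrac12,\ 2\alpha+1,\ \alpha+1,\ 1$, so $\Gamma(1)=1$ drops out, and — this is the decisive simplification — the parameters $2\alpha+1$ and $\alpha+1$ now occur in both the numerator and the denominator of the ${}_4\tilde{F}_3$ and cancel, leaving ${}_2F_1\big({\alpha+\frac12,1 \atop \alpha+\frac32}\,\big|\,a^2/b^2\big)$ after unfolding the regularisation. Combining the surviving gamma factors with the prefactor $\tfrac{1}{4a^2}(a/b)^{2\alpha+2}$ and using $\Gamma(\alpha+\tfrac12)/\Gamma(\alpha+\tfrac32)=1/(\alpha+\tfrac12)=2/(2\alpha+1)$ yields $\dfrac{(a/b)^{2\alpha}}{2b^2(1+2\alpha)}\,{}_2F_1\big({\alpha+\frac12,1 \atop \alpha+\frac32}\,\big|\,a^2/b^2\big)$, which is (\ref{2f1ab}).

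The step most prone to slips is the gamma-factor rearrangement in part 1 (getting the power of $2$ and the exponent $3$ on $\Gamma(\alpha+1)$ right); the parameter cancellation in part 2 is clean once one notices the coincidence $\alpha+\beta+\tfrac{s}{2}=2\alpha+1$ and $\alpha+\tfrac{s}{2}=\alpha+1$ at $s=2$, $\beta=\alpha$.
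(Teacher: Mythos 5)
Your proposal is correct and follows essentially the same route as the paper: both parts are obtained by setting $\beta=\alpha$ and $s=1$ (resp.\ $s=2$) in formula (\ref{corf3}), unfolding the regularised ${}_4\tilde{F}_3$, and simplifying the gamma factors via the duplication formula for part 1 and the parameter cancellation (\ref{fred}) for part 2. Your gamma-factor bookkeeping checks out and matches the paper's computation.
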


\begin{remark} 1. Suppose that $\mathrm{Re}\,b\geq\mathrm{Re}\,a>0$. Then setting $\alpha=0$ in the the integral formula (\ref{haw}) yields
\begin{align}
\int_0^\infty I_0^2(ax)K_0^2(bx)\,\mathrm{d}x&=\frac{\pi^2}{4ab}\,{}_4F_3\bigg({\frac{1}{2},\frac{1}{2},\frac{1}{2},\frac{1}{2} \atop 1,1,1} \; \bigg| \;\frac{a^2}{b^2}\bigg) \label{ox1} \\
&=\sum_{k=0}^\infty\bigg(\frac{(1/2)_k}{k!}\bigg)^4\bigg(\frac{a}{b}\bigg)^{2k} \nonumber \\
&=\frac{\pi^2}{4ab}\sum_{k=0}^\infty\binom{2k}{k}^4\bigg(\frac{a}{16b}\bigg)^{2k}, \label{ox2}  
\end{align}
where we used the basic relation $(1/2)_k=(2k)!/(4^k k!)$. The formulas (\ref{ox1}) and (\ref{ox2}) generalise formulas given by \cite{b08} for the case $a=b$.

\vspace{3mm}

\noindent 2. Suppose that $\mathrm{Re}\,b>\mathrm{Re}\,a>0$ and let $n=0,1,2,\ldots$. Then, applying the reduction formula (\ref{2f1redtanh}) to the integral formula (\ref{2f1ab}) yields the elementary evaluation
\begin{align*}
\int_0^\infty xI_n^2(ax)K_n^2(bx)\,\mathrm{d}x=\frac{1}{2b^2}\bigg\{\frac{b}{a}\tanh^{-1}\bigg(\frac{a}{b}\bigg)-\sum_{k=0}^{n-1}\frac{(a/b)^{2k}}{2k+1}\bigg\},    
\end{align*} 
whilst applying the reduction formula (\ref{flog}) yields
\begin{align*}
\int_0^\infty xI_{n+1/2}^2(ax)K_{n+1/2}^2(bx)\,\mathrm{d}x=\frac{1}{4ab}\bigg\{-\ln\bigg(1-\frac{a^2}{b^2}\bigg)-\sum_{k=1}^{n}\frac{(a/b)^{2k}}{k}\bigg\}.    
\end{align*}
Here we use the convention that the empty sum is set to 0.
\end{remark}

\section{Integrals of products of four Airy functions}\label{sec3}

The Airy functions $\mathrm{Ai}(z)$ and $\mathrm{Bi}(z)$ can be expressed in terms of the modified Bessel functions of the first and second kind: for $z\in\mathbb{C}$,
\begin{align}
\mathrm{Ai}(z)&=\frac{1}{\pi}\sqrt{\frac{z}{3}}K_{1/3}\bigg(\frac{2}{3}z^{3/2}\bigg),  \label{airyaidef} \\
\mathrm{Bi}(z)&=\sqrt{\frac{z}{3}}\bigg[I_{1/3}\bigg(\frac{2}{3}z^{3/2}\bigg)+I_{-1/3}\bigg(\frac{2}{3}z^{3/2}\bigg)\bigg]   \label{airybidef}  \end{align}
(see equations 9.6.1 and 9.6.3 of \cite{olver}). As such, we can apply the integral formulas of Section \ref{sec2} to obtain integral formulas for quartic products of Airy functions. In the following theorem, we present several such formulas, all of which we believe to be new.

\begin{theorem}\label{corai}
\noindent 1. For $\mathrm{Re}\,a,\mathrm{Re}\,b>0$,
\begin{align}
\int_0^\infty \mathrm{Ai}^2(ax)\mathrm{Ai}^2(bx)\,\mathrm{d}x&=\frac{1}{12\pi^2\sqrt{ab}}\tanh^{-1}\bigg(\frac{\sqrt{ab}}{a+b}\bigg). \label{ai22}  
\end{align}   
\noindent 2. For $\mathrm{Re}\,a,\mathrm{Re}\,b>0$,
\begin{align}\int_0^\infty \mathrm{Bi}(ax)\mathrm{Ai}(ax)\mathrm{Ai}^2(bx)\,\mathrm{d}x=\frac{\pi\sqrt{3}}{10ab^2}\bigg\{5\,{}_2F_1\bigg({\frac{1}{6},1 \atop \frac{7}{6}}\;\bigg| \;\frac{a^3}{b^3}\bigg)-\frac{a^2}{b^2}\,{}_2F_1\bigg({\frac{5}{6},1 \atop \frac{11}{6}}\;\bigg| \;\frac{a^3}{b^3}\bigg)\bigg\}. \label{s57}
\end{align}
For $|a/b|<1$ the integral takes an elementary form:
\begin{align}\label{lion}
\int_0^\infty \mathrm{Bi}(ax)\mathrm{Ai}(ax)\mathrm{Ai}^2(bx)\,\mathrm{d}x=\frac{1}{12\pi^2\sqrt{ab}}\tan^{-1}\bigg(\frac{\sqrt{3ab}}{b-a}\bigg).    
\end{align}
\noindent 3. For $\mathrm{Re}\,b>\mathrm{Re}\,a>0$,
\begin{align}
\int_0^\infty \mathrm{Bi}^2(ax)\mathrm{Ai}^2(bx)\,\mathrm{d}x
&=\frac{1}{12\pi^2\sqrt{ab}}\bigg\{\tanh^{-1}\bigg(\frac{a^{3/2}}{b^{3/2}}\bigg)+3\tanh^{-1}\bigg(\sqrt{\frac{a}{b}}\bigg)\bigg\}. \label{lion2}
\end{align}
\noindent 4. For $\mathrm{Re}\,b>3\mathrm{Re}\,a>0$,
\begin{align} \label{lion3}
\int_0^\infty \mathrm{Bi}^3(ax)\mathrm{Ai}(bx)\,\mathrm{d}x&=I(\tfrac{1}{3},\tfrac{1}{3},\tfrac{1}{3})+3I(\tfrac{1}{3},\tfrac{1}{3},-\tfrac{1}{3})+3I(\tfrac{1}{3},-\tfrac{1}{3},-\tfrac{1}{3})+I(-\tfrac{1}{3},-\tfrac{1}{3},-\tfrac{1}{3}),   
\end{align}
where
\begin{align*}
I(p,q,r)&=\frac{1}{3\pi}\bigg(\frac{a}{b}\bigg)^{2(p+q+r)+1}\times\\
&\quad\times F_C^{(3)}\bigg(\frac{p+q+r}{2}+\frac{2}{3},\frac{p+q+r}{2}+\frac{1}{3};p+1,q+1,r+1;\frac{a^2}{b^2},\frac{a^2}{b^2},\frac{a^2}{b^2}\bigg). \end{align*}
\end{theorem}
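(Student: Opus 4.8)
The unifying idea is to convert every Airy factor into modified Bessel functions via (\ref{airyaidef})--(\ref{airybidef}) and thereby reduce each of the four integrals to one, or a short sum, of the Mellin integrals already evaluated in Section \ref{sec2}. Using $\mathrm{Ai}(cx)=\tfrac1\pi\sqrt{cx/3}\,K_{1/3}(\tfrac23c^{3/2}x^{3/2})$ and $\mathrm{Bi}(cx)=\sqrt{cx/3}\,[I_{1/3}(\tfrac23c^{3/2}x^{3/2})+I_{-1/3}(\tfrac23c^{3/2}x^{3/2})]$, and expanding $(I_{1/3}+I_{-1/3})^{m}$ by the binomial theorem whenever a power $\mathrm{Bi}^{m}$ occurs, each integrand becomes a constant times $x^{2}$ times a product of four modified Bessel functions of orders $\pm1/3$ with arguments $\tfrac23a^{3/2}x^{3/2}$ or $\tfrac23b^{3/2}x^{3/2}$. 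The substitution $t=x^{3/2}$, for which $x^{2}\,\mathrm{d}x=\tfrac23t\,\mathrm{d}t$, then turns each term into a Section \ref{sec2} integral with Mellin parameter $s=2$ and Bessel arguments $\tfrac23a^{3/2}t$, $\tfrac23b^{3/2}t$; ratios $a/b$ become $(a/b)^{3/2}$, squared ratios become $(a/b)^{3}$, and the constants $\tfrac23$ cancel against the prefactors. The hypotheses on $a,b$ in Theorem \ref{corai} are precisely what make the relevant Section \ref{sec2} formula applicable after this substitution.

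In part 1 the reduced integrand is a multiple of $t\,K_{1/3}^{2}(\tfrac23a^{3/2}t)K_{1/3}^{2}(\tfrac23b^{3/2}t)$, so (\ref{kab13}) applies at once and yields (\ref{ai22}). In part 2 the expansion of $\mathrm{Bi}(ax)$ produces two terms $t\,I_{\pm1/3}K_{1/3}K_{1/3}^{2}$: the $I_{1/3}$ term is (\ref{forab2}) with $\alpha=1/3$, and, since $K_{-1/3}=K_{1/3}$, the $I_{-1/3}$ term is (\ref{forab2}) with $\alpha=-1/3$; adding them, the $\tanh^{-1}$ pieces cancel and one recovers the pair of Gauss hypergeometric functions in (\ref{s57}). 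To reach the elementary form (\ref{lion}) for $|a/b|<1$ I would write $\,{}_2F_1(\tfrac16,1;\tfrac76;z)=\sum_{k\ge0}z^{k}/(6k+1)$ and $\,{}_2F_1(\tfrac56,1;\tfrac{11}{6};z)=5\sum_{k\ge0}z^{k}/(6k+5)$ with $z=(a/b)^{3}$, so that the relevant combination is, up to a monomial factor, $\sum_{n\equiv1\,(6)}\rho^{n}/n-\sum_{n\equiv5\,(6)}\rho^{n}/n$ with $\rho=\sqrt{a/b}$; its derivative in $\rho$ is $(1-\rho^{4})/(1-\rho^{6})$, whose antiderivative (by partial fractions) is $\tfrac1{\sqrt3}\tan^{-1}(\sqrt3\,\rho/(1-\rho^{2}))=\tfrac1{\sqrt3}\tan^{-1}(\sqrt{3ab}/(b-a))$.

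Part 3 is analogous, now with $(I_{1/3}+I_{-1/3})^{2}=I_{1/3}^{2}+2I_{1/3}I_{-1/3}+I_{-1/3}^{2}$. The two squared terms $t\,I_{\pm1/3}^{2}K_{1/3}^{2}$ are given by (\ref{2f1ab}) with $\alpha=\pm1/3$ (again using $K_{-1/3}=K_{1/3}$), and the same residue-class-mod-$6$ summation collapses their sum to $\tanh^{-1}(\sqrt{a/b})-\tfrac13\tanh^{-1}((a/b)^{3/2})$. The cross term $2\,t\,I_{1/3}(At)I_{-1/3}(At)K_{1/3}^{2}(Bt)$ is an instance of (\ref{iikkgen}) with $s=2$, $\gamma=\delta=1/3$ and $\lambda=\alpha+\beta=0$; for these values four pairs of parameters of the ${}_{6}\tilde F_{5}$ cancel, so it becomes a multiple of the elementary ${}_{2}\tilde F_{1}(\tfrac12,1;\tfrac32;(a/b)^{3})$ and contributes one more $\tanh^{-1}((a/b)^{3/2})$ (one must keep in mind that cancelling lower parameters of a \emph{regularized} hypergeometric leaves their $\Gamma$-values as extra factors). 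Summing the three pieces gives (\ref{lion2}). For part 4, $(I_{1/3}+I_{-1/3})^{3}$ expands into the products $I_{p}I_{q}I_{r}$, $(p,q,r)\in\{1/3,-1/3\}^{3}$, occurring with multiplicities $1,3,3,1$; each term $t\,I_{p}(At)I_{q}(At)I_{r}(At)K_{1/3}(Bt)$ is evaluated by (\ref{lion5}) with $s=2$, $(\alpha,\beta,\gamma)=(p,q,r)$, $\delta=1/3$ (the hypothesis $\mathrm{Re}\,b>3\mathrm{Re}\,a$ ensuring convergence), and collecting the four contributions yields (\ref{lion3}).

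The one genuinely delicate point is the passage to the elementary forms in parts 2 and 3: each individual Gauss hypergeometric function that arises is transcendental, and it is the special combination forced by the $\pm1/3$ structure of $\mathrm{Bi}$ that telescopes — through the decomposition of $-\ln(1-\rho)$ into residue classes modulo $6$ — to an inverse hyperbolic tangent or, in part 2 with $|a/b|<1$, an inverse tangent; part 3 also relies on the observation that the $\lambda=0$, $\gamma=\delta$ specialisation degenerates the ${}_{6}\tilde F_{5}$ of (\ref{iikkgen}) all the way down to ${}_{2}\tilde F_{1}(\tfrac12,1;\tfrac32;\cdot)$. Everything else — tracking the numerical constants through the substitution and the prefactors, and tidying up with the inverse-hyperbolic addition formula (\ref{addtanh}) — is routine.
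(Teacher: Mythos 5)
Your proposal is correct and, for parts 1, 2 and 4, follows the paper's proof essentially verbatim: the same Airy-to-Bessel conversion and substitution $y=\tfrac{2}{3}x^{3/2}$, the same appeals to (\ref{kab13}), (\ref{forab2}) and (\ref{lion5}), and, for the elementary form (\ref{lion}), the same Euler-integral/partial-fraction evaluation of the combination of ${}_2F_1$'s that the paper isolates as Lemma \ref{flfl}. The one place you genuinely diverge is part 3. The paper eliminates $I_{-1/3}$ at the outset via the connection formula (\ref{ikrel}), so that only three integrals arise --- $\int yI_{1/3}^2K_{1/3}^2$, $\int yI_{1/3}K_{1/3}K_{1/3}^2$ and $\int yK_{1/3}^2K_{1/3}^2$, each already in closed form from (\ref{2f1ab}), (\ref{forab2}) and (\ref{kab13}) at $\alpha=1/3$ --- and then recombines the resulting $\tanh^{-1}$'s with the addition formula (\ref{addtanh}). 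You instead keep the binomial expansion of $(I_{1/3}+I_{-1/3})^2$ as it stands, evaluate the squared terms by (\ref{2f1ab}) at $\alpha=\pm1/3$ (legitimate since $K_{-1/3}=K_{1/3}$ and $\mathrm{Re}\,\alpha>-1$ holds), collapse their sum through the residue classes $1,5 \pmod 6$ of $-\ln(1-\rho)$, and treat the cross term by the general formula (\ref{iikkgen}) with $\lambda=0$, $\gamma=\delta=\tfrac13$, where four parameter pairs of the ${}_6\tilde{F}_5$ cancel down to ${}_2F_1(\tfrac12,1;\tfrac32;\cdot)$; you correctly flag that the cancelled lower parameters of the \emph{regularized} function leave behind $\Gamma(\tfrac43)\Gamma(\tfrac23)$, which then cancels against the prefactor of (\ref{iikkgen}). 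I have checked that the constants in your route reproduce (\ref{lion2}) exactly. What your version buys is that it avoids the connection formula and the final $\tanh^{-1}$ addition step; what it costs is having to degenerate the ${}_6\tilde{F}_5$ by hand rather than staying entirely within the already-simplified formulas of Section \ref{sec2.2}. Both arguments are sound.
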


\begin{remark} Setting $b=1$ and then taking the limit $a\rightarrow1^{-}$ in the integral formulas (\ref{ai22}) and (\ref{lion}) (using the basic results that $\tanh^{-1}(1/2)=\ln(3)/2$ and $\lim_{x\rightarrow\infty}\tan^{-1}(x)=\pi/2$) yields the simple evaluations
\begin{align*}
\int_0^\infty \mathrm{Ai}^4(x)\,\mathrm{d}x&=\frac{\ln(3)}{24\pi^2},    \\
\int_0^\infty \mathrm{Ai}^3(x)\mathrm{Bi}(x)\,\mathrm{d}x&=\frac{1}{24\pi}.   
\end{align*} 
These integral formulas were previously obtained by \cite{l93} and \cite{r97b}, respectively. 
\end{remark}

In our derivation of the integral formula (\ref{lion}), we will require the following elementary evaluation for a difference of two hypergeometric functions.

\begin{lemma}\label{flfl} For $|z|<1$,
\begin{align}
D(z):=5\,{}_2F_1\bigg({\frac{1}{6},1 \atop \frac{7}{6}}\;\bigg| \;z\bigg)-z^{2/3}\,{}_2F_1\bigg({\frac{5}{6},1 \atop \frac{11}{6}}\;\bigg| \;z\bigg)=\frac{5}{\sqrt{3}z^{1/6}}\tan^{-1}\bigg(\frac{\sqrt{3}z^{1/6}}{1-z^{1/3}}\bigg).    \label{dddfor}
\end{align}    
\end{lemma}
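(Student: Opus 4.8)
The plan is to reduce each of the two Gauss hypergeometric functions appearing in $D(z)$ to an elementary function. The key is the classical evaluation ${}_2F_1(a,1;a+1;z)=\sum_{k\ge0}\frac{a}{k+a}z^k$, which in our case gives ${}_2F_1(\tfrac16,1;\tfrac76;z)=\sum_{k\ge0}\frac{z^k}{6k+1}$ and ${}_2F_1(\tfrac56,1;\tfrac{11}{6};z)=\sum_{k\ge0}\frac{5z^k}{6k+5}$, both convergent for $|z|<1$. Writing $w=z^{1/6}$ with the principal branch (so that $(z^{1/6})^4=z^{2/3}$ and $D$ becomes an honest power series in $w$) and integrating the geometric series termwise over $[0,w]$, one obtains
\[
\sum_{k\ge0}\frac{w^{6k}}{6k+1}=\frac{1}{w}\int_0^w\frac{\mathrm{d}t}{1-t^6},\qquad
\sum_{k\ge0}\frac{w^{6k+4}}{6k+5}=\frac{1}{w}\int_0^w\frac{t^4\,\mathrm{d}t}{1-t^6},
\]
and hence
\[
D(z)=\frac{5}{w}\int_0^w\frac{1-t^4}{1-t^6}\,\mathrm{d}t=\frac{5}{w}\int_0^w\frac{1+t^2}{1+t^2+t^4}\,\mathrm{d}t .
\]

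It then remains to evaluate this elementary integral, which I would first do for $z\in(0,1)$, i.e.\ $w\in(0,1)$. Dividing numerator and denominator by $t^2$ and substituting $u=t-1/t$ (so that $\mathrm{d}u=(1+t^{-2})\,\mathrm{d}t$ and $t^2+1+t^{-2}=u^2+3$) gives the antiderivative $\frac{1}{\sqrt{3}}\tan^{-1}\!\big(\frac{t^2-1}{\sqrt{3}\,t}\big)$; the one point requiring care is the lower limit, since as $t\to0^+$ the argument tends to $-\infty$ and the antiderivative to $-\pi/(2\sqrt{3})$. Therefore
\[
\int_0^w\frac{1+t^2}{1+t^2+t^4}\,\mathrm{d}t=\frac{1}{\sqrt{3}}\bigg(\tan^{-1}\frac{w^2-1}{\sqrt{3}\,w}+\frac{\pi}{2}\bigg)=\frac{1}{\sqrt{3}}\tan^{-1}\frac{\sqrt{3}\,w}{1-w^2},
\]
where the last equality uses $\tan^{-1}x+\pi/2=\tan^{-1}(-1/x)$ for $x<0$ together with the fact that $\frac{w^2-1}{\sqrt{3}\,w}$ and $\frac{\sqrt{3}\,w}{1-w^2}$ are negative reciprocals. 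Multiplying by $5/w$ and substituting back $w=z^{1/6}$ yields precisely the right-hand side of (\ref{dddfor}). (An alternative that sidesteps the singular antiderivative is to use the partial fraction decomposition $\frac{1+t^2}{1+t^2+t^4}=\frac{1}{2}\big(\frac{1}{t^2-t+1}+\frac{1}{t^2+t+1}\big)$, integrate to $\frac{1}{\sqrt{3}}\big(\tan^{-1}\frac{2t-1}{\sqrt{3}}+\tan^{-1}\frac{2t+1}{\sqrt{3}}\big)$ whose boundary contributions at $t=0$ cancel, and then invoke the arctangent addition formula, whose hypothesis holds since the product of the two arguments equals $(4w^2-1)/3<1$ for $w\in(0,1)$.)

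Finally, both sides of (\ref{dddfor}) are single-valued analytic functions of $w=z^{1/6}$ on the disc $|w|<1$ — for the right-hand side one notes that $w\mapsto\frac{\sqrt{3}\,w}{1-w^2}$ maps this disc into the open right half-plane, on which $\tan^{-1}$ is analytic — and they coincide for $w\in(0,1)$, so by the identity theorem they coincide throughout $|w|<1$, which is the asserted identity for all $|z|<1$. I expect the only genuine obstacle to be the branch bookkeeping in the integration step: in the $u=t-1/t$ route the spurious pole of the antiderivative at $t=0$ is exactly what contributes the decisive $+\pi/2$, and in the partial-fractions route one must verify the range condition needed for the arctangent addition formula.
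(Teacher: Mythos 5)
Your proof is correct and takes essentially the same route as the paper: the same reduction of ${}_2F_1(b,1;b+1;z)$ to $\frac{b}{z^b}\int_0^z\frac{x^{b-1}}{1-x}\,\mathrm{d}x$, the same passage to $\frac{5}{w}\int_0^{w}\frac{1+t^2}{1+t^2+t^4}\,\mathrm{d}t$ with $w=z^{1/6}$, and an equivalent elementary evaluation of that integral (the paper uses your parenthetical partial-fraction/arctangent-addition alternative; your primary $u=t-1/t$ route gives the same antiderivative). One small slip in your extra analytic-continuation step: $w\mapsto\sqrt{3}w/(1-w^2)$ does \emph{not} map $|w|<1$ into the open right half-plane (e.g.\ $w=\mathrm{i}/2$ gives $2\sqrt{3}\mathrm{i}/5$), but its image does avoid the branch cuts $\pm \mathrm{i}[1,\infty)$ of $\tan^{-1}$ (setting $\sqrt{3}w/(1-w^2)=\mathrm{i}y$ with $y$ real and $|y|\geq1$ forces $|w|=1$), so the composite is still analytic on the disc and your identity-theorem argument goes through.
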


\begin{remark}
The condition $|z|<1$ is not just a technical condition required to push through the proof. Numerical tests reveal that 
equality (\ref{dddfor}) does not hold in general for $|z|>1$.    
\end{remark}

\begin{proof}
We begin by noting that, for $|z|<1$ and $\mathrm{Re}\,b>0$,
\begin{align*}{}_2F_1\bigg({1,b \atop b+1}\;\bigg| \;z\bigg)=\sum_{n=0}^\infty\frac{(1)_n(b)_n}{(b+1)_n}\frac{z^n}{n!}=\sum_{n=0}^\infty\frac{bz^n}{b+n}=\frac{b}{z^b}\int_0^z\frac{x^{b-1}}{1-x}\,\mathrm{d}x,
\end{align*}
from which we obtain that
\begin{align*}
D(z)=\frac{5}{6z^{1/6}}\int_0^z\frac{x^{-5/6}-x^{-1/6}}{1-x}\,\mathrm{d}x=\frac{5}{z^{1/6}}\int_0^{z^{1/6}}\frac{1-t^4}{1-t^6}\,\mathrm{d}t,   
\end{align*}
where we made the substitution $x=t^6$. Using the factorisations $1-t^4=(1-t^2)(1+t^2)$ and $(1-t^6)=(1-t^2)(1+t^2+t^4)$ followed by a partial fraction decomposition and a standard integration we obtain that
\begin{align*}
D(z)&=\frac{5}{z^{1/6}}\int_0^{z^{1/6}}\frac{t^2+1}{t^4+t^2+1}\,\mathrm{d}t=\frac{5}{2z^{1/6}}\int_0^{z^{1/6}}\bigg(\frac{1}{t^2-t+1}+\frac{1}{t^2+t+1}\bigg)\,\mathrm{d}t \\
&=\frac{5}{2z^{1/6}}\int_0^{z^{1/6}}\bigg(\frac{1}{(t-1/2)^2+3/4}+\frac{1}{(t+1/2)^2+3/4}\bigg)\,\mathrm{d}t\\
&=\frac{5}{2z^{1/6}}\bigg[\frac{2}{\sqrt{3}}\bigg\{\tan^{-1}\bigg(\frac{2t-1}{\sqrt{3}}\bigg)+\tan^{-1}\bigg(\frac{2t+1}{\sqrt{3}}\bigg)\bigg\}\bigg]_0^{z^{1/6}}\\
&=\frac{5}{\sqrt{3}z^{1/6}}\bigg\{\tan^{-1}\bigg(\frac{2z^{1/6}-1}{\sqrt{3}}\bigg)+\tan^{-1}\bigg(\frac{2z^{1/6}+1}{\sqrt{3}}\bigg)\bigg\}.
\end{align*}
By applying the addition formula $\tan^{-1}(u)+\tan^{-1}(v)=\tan^{-1}((u+ v)/(1- uv))$ (see \cite[equation 4.24.15]{olver}) to the above expression we obtain the desired formula (\ref{dddfor}).
\end{proof}

\begin{proof}[Proof of Theorem \ref{corai}] 1. Expressing the Airy function $\mathrm{Ai}(x)$ in terms of the modified Bessel function of the second kind using the relation (\ref{airyaidef}) gives that
\begin{align}
\int_0^\infty \mathrm{Ai}^2(ax)\mathrm{Ai}^2(bx)\,\mathrm{d}x&=\frac{ab}{9\pi^4}\int_0^\infty x^{2}K_{1/3}^2\bigg(\frac{2}{3}(ax)^{3/2}\bigg)K_{1/3}^2\bigg(\frac{2}{3}(bx)^{3/2}\bigg)\,\mathrm{d}x \nonumber\\
&=\frac{ab}{6\pi^4}\int_0^\infty yK_{1/3}^2(a^{3/2}y)K_{1/3}^2(b^{3/2}y)\,\mathrm{d}y, \label{subb0}
\end{align}
where we made the substitution $y=2x^{3/2}/3$. We now evaluate the integral (\ref{subb0}) using the formula (\ref{kab13}), which yields the integral formula (\ref{ai22}).

\vspace{3mm}

\noindent 2. We now argue similarly to in part 1 of the proof by expressing the Airy functions $\mathrm{Ai}(z)$ and $\mathrm{Bi}(z)$ in terms of the modified Bessel functions of the first and second kind using the relations (\ref{airyaidef}) and (\ref{airybidef}) to obtain that
\begin{align}
\int_0^\infty \mathrm{Bi}(ax)\mathrm{Ai}(ax)\mathrm{Ai}^2(bx)\,\mathrm{d}x&=\frac{ab}{6\pi^3}\bigg\{\int_0^\infty yK_{1/3}^2(b^{3/2}y)K_{1/3}(a^{3/2}y)I_{1/3}(a^{3/2}y)\,\mathrm{d}y \nonumber\\
&\quad+\int_0^\infty yK_{1/3}^2(b^{3/2}y)K_{1/3}(a^{3/2}y)I_{-1/3}(a^{3/2}y)\,\mathrm{d}y\bigg\} \nonumber \\
&=\frac{ab}{6\pi^3}\bigg\{\int_0^\infty yK_{1/3}^2(b^{3/2}y)K_{1/3}(a^{3/2}y)I_{1/3}(a^{3/2}y)\,\mathrm{d}y \nonumber\\
&\quad+\int_0^\infty yK_{-1/3}^2(b^{3/2}y)K_{-1/3}(a^{3/2}y)I_{-1/3}(a^{3/2}y)\,\mathrm{d}y\bigg\},\label{subb2}
\end{align}
where in the second step we used that $K_{1/3}(z)=K_{-1/3}(z)$ (see equation (\ref{par})). Evaluating the integrals (\ref{subb2}) using the integral formula (\ref{forab2}) and then simplifying now gives that
\begin{align}
\int_0^\infty \mathrm{Bi}(ax)\mathrm{Ai}(ax)\mathrm{Ai}^2(bx)\,\mathrm{d}x=
\frac{\pi\sqrt{3}}{10ab^2}\bigg\{5\,{}_2F_1\bigg({\frac{1}{6},1 \atop \frac{7}{6}}\;\bigg| \;\frac{a^3}{b^3}\bigg)-\frac{a^2}{b^2}\,{}_2F_1\bigg({\frac{5}{6},1 \atop \frac{11}{6}}\;\bigg| \;\frac{a^3}{b^3}\bigg)\bigg\}, \label{bomdia}
\end{align}
which completes the derivation of the integral formula (\ref{s57}). When $|a/b|<1$ we can simplify the expression (\ref{bomdia}) using formula (\ref{dddfor}) of Lemma \ref{flfl} to obtain the integral formula (\ref{lion}).

\vspace{3mm}

\noindent 3. From the relation (\ref{airybidef}) we have that
\begin{align}
\int_0^\infty \mathrm{Bi}^2(ax)\mathrm{Ai}^2(bx)\,\mathrm{d}x&=\frac{ab}{6\pi^2}\bigg\{\int_0^\infty yI_{1/3}^2(a^{3/2}y)K_{1/3}^2(b^{3/2}y)\,\mathrm{d}y\nonumber\\
&\quad+2\int_0^\infty yI_{1/3}(a^{3/2}y)I_{-1/3}(a^{3/2}y)K_{1/3}^2(b^{3/2}y)\,\mathrm{d}y\nonumber\\
&\quad+\int_0^\infty yI_{-1/3}^2(a^{3/2}y)K_{1/3}^2(b^{3/2}y)\,\mathrm{d}y\bigg\}. \label{il30}  
\end{align}
Using relation (\ref{ikrel}) we have that $I_{-1/3}(z)=I_{1/3}(z)+(\sqrt{3}/\pi)K_{1/3}(z)$, and applying this formula to equation (\ref{il30}) gives that
\begin{align}
\int_0^\infty \mathrm{Bi}^2(ax)\mathrm{Ai}^2(bx)\,\mathrm{d}x&=\frac{ab}{6\pi^2}\bigg\{3\int_0^\infty yI_{1/3}^2(a^{3/2}y)K_{1/3}^2(b^{3/2}y)\,\mathrm{d}y\nonumber\\
&\quad+\int_0^\infty yI_{-1/3}^2(a^{3/2}y)K_{1/3}^2(b^{3/2}y)\,\mathrm{d}y\nonumber\\
&\quad+\frac{2\sqrt{3}}{\pi}\int_0^\infty yI_{1/3}(a^{3/2}y)K_{1/3}(a^{3/2}y)K_{1/3}^2(b^{3/2}y)\,\mathrm{d}y\bigg\} \nonumber\\
&=\frac{ab}{6\pi^2}\bigg\{4\int_0^\infty yI_{1/3}^2(a^{3/2}y)K_{1/3}^2(b^{3/2}y)\,\mathrm{d}y\nonumber\\
&\quad+\frac{4\sqrt{3}}{\pi}\int_0^\infty yI_{1/3}(a^{3/2}y)K_{1/3}(a^{3/2}y)K_{1/3}^2(b^{3/2}y)\,\mathrm{d}y\nonumber\\
&\quad+\frac{3}{\pi^2}\int_0^\infty yK_{1/3}^2(a^{3/2}y)K_{1/3}^2(b^{3/2}y)\,\mathrm{d}y\bigg\}.\label{dttd}
\end{align}
Evaluating the integrals in (\ref{dttd}) using the integral formulas (\ref{2f1ab}), (\ref{forab2}) and (\ref{kab13}) gives that
\begin{align}
\int_0^\infty \mathrm{Bi}^2(ax)\mathrm{Ai}^2(bx)\,\mathrm{d}x&=\frac{ab}{6\pi^2}\bigg\{\frac{6(a/b)}{5b^3}\,{}_2F_1\bigg({1,\frac{5}{6} \atop \frac{11}{6}}\;\bigg| \;\frac{a^3}{b^3}\bigg)\nonumber\\
&\quad+\frac{4\sqrt{3}}{\pi}\cdot \frac{\pi}{2\sqrt{3}b^3}\bigg[\frac{b^{3/2}}{a^{3/2}}\tanh^{-1}\bigg(\frac{a^{3/2}}{b^{3/2}}\bigg)-\frac{3(a/b)}{5}\,{}_2F_1\bigg({1,\frac{5}{6} \atop \frac{11}{6}}\;\bigg| \;\frac{a^3}{b^3}\bigg)\bigg]\nonumber\\
&\quad+\frac{3}{\pi^2}\cdot \frac{\pi^2}{2(ab)^{3/2}}\tanh^{-1}\bigg(\frac{\sqrt{ab}}{a+b}\bigg)\bigg\} \nonumber \\
&=\frac{1}{12\pi^2\sqrt{ab}}\bigg\{4\tanh^{-1}\bigg(\frac{a^{3/2}}{b^{3/2}}\bigg)+3\tanh^{-1}\bigg(\frac{\sqrt{ab}}{a+b}\bigg)\bigg\}, \label{addd}
\end{align}
and simplifying the expression in (\ref{addd}) using the addition formula (\ref{addtanh}) yields formula (\ref{lion2}).

\vspace{3mm}

\noindent 4. Arguing as before, we have that
\begin{align*}
\int_0^\infty \mathrm{Bi}^3(ax)\mathrm{Ai}(bx)\,\mathrm{d}x&=\frac{ab}{6\pi}\bigg\{\int_0^\infty yI_{1/3}^3(a^{3/2}y)K_{1/3}(b^{3/2}y)\,\mathrm{d}y\\
&\quad+3\int_0^\infty yI_{1/3}^2(a^{3/2}y)I_{-1/3}(a^{3/2}y)K_{1/3}(b^{3/2}y)\,\mathrm{d}y\\
&\quad+3\int_0^\infty yI_{1/3}(a^{3/2}y)I_{-1/3}^2(a^{3/2}y)K_{1/3}(b^{3/2}y)\,\mathrm{d}y\\
&\quad+\int_0^\infty yI_{-1/3}^3(a^{3/2}y)K_{1/3}(b^{3/2}y)\,\mathrm{d}y\bigg\},
\end{align*}
and evaluating the integrals using the integral formula (\ref{lion5}) yields formula (\ref{lion3}).
\end{proof}

\section{Proofs from Section \ref{sec2}}\label{sec4}

\begin{proof}[Proof of Theorem \ref{thm1.1}] 1. We begin by noting that, by an application of the integral formula 6.576(4) of \cite{g07}, the Mellin transform of the function $f_{\mu,\nu,\alpha}(x)=K_\mu(a x)K_\nu(a x)$ is given by
\begin{align}
\mathcal{M}\{f_{\mu,\nu,a}\}(s)&=\int_0^\infty x^{s-1} K_\mu(a x)K_\nu(a x)\,\mathrm{d}x \nonumber \\
&=\frac{2^{s-3}}{a^s \Gamma(s)}\Gamma\bigg(\frac{s+\mu+\nu}{2}\bigg)\Gamma\bigg(\frac{s-\mu+\nu}{2}\bigg)\Gamma\bigg(\frac{s+\mu-\nu}{2}\bigg)\Gamma\bigg(\frac{s-\mu-\nu}{2}\bigg), \label{mellin1}
\end{align}
which exists for $\mathrm{Re}\,a>0$ and $\mathrm{Re}\,s>|\mathrm{Re}\, \mu|+|\mathrm{Re}\, \nu|$.

By the standard formula for the Mellin transform of the product of two functions we then have that, for $\mathrm{Re}\,a,\mathrm{Re}\,b>0$ and $\mathrm{Re}\, s>|\mathrm{Re}\,\alpha|+|\mathrm{Re}\,\beta|+|\mathrm{Re}\,\gamma|+|\mathrm{Re}\,\delta|$,
\begin{align*}
I&=\int_0^\infty x^{s-1} K_\alpha(a x) K_\beta(a x) K_\gamma(b x) K_\delta(b x) \,\mathrm{d}x\\
&=\frac{1}{2\pi\mathrm{i}}\int_{c-\mathrm{i}\infty}^{c+\mathrm{i}\infty}\mathcal{M}\{f_{\alpha,\beta,a}\}(s-r)\mathcal{M}\{f_{\gamma,\delta,b}\}(r)\,\mathrm{d}r\\
&=\frac{1}{2\pi\mathrm{i}}\int_{c-\mathrm{i}\infty}^{c+\mathrm{i}\infty} \frac{2^{s-6}}{a^s}\bigg(\frac{a}{b}\bigg)^r\frac{1}{\Gamma(s-r)\Gamma(r)}\\
&\quad\times\Gamma\bigg(\frac{s-r+\alpha+\beta}{2}\bigg)\Gamma\bigg(\frac{s-r-\alpha+\beta}{2}\bigg)\Gamma\bigg(\frac{s-r+\alpha-\beta}{2}\bigg)\Gamma\bigg(\frac{s-r-\alpha-\beta}{2}\bigg)\\
&\quad\times \Gamma\bigg(\frac{r+\gamma+\delta}{2}\bigg)\Gamma\bigg(\frac{r-\gamma+\delta}{2}\bigg)\Gamma\bigg(\frac{r+\gamma-\delta}{2}\bigg)\Gamma\bigg(\frac{r-\gamma-\delta}{2}\bigg)\,\mathrm{d}r,
\end{align*}
where $c>|\mathrm{Re}\,\gamma|+|\mathrm{Re}\,\delta|$. By applying the duplication formula 
\begin{align}\label{dupfor} \Gamma(2z)=\pi^{-1/2}2^{2z-1}\Gamma(z)\Gamma(z+1/2),\quad 2z\notin\mathbb{Z}
\end{align}
(see \cite[Section 5.5(iii)]{olver}) we now get that
\begin{align}
I&=\frac{\pi}{16a^s}\cdot \frac{1}{2\pi\mathrm{i}}\int_{c-\mathrm{i}\infty}^{c+\mathrm{i}\infty}\bigg(\frac{a}{b}\bigg)^r\frac{1}{\Gamma((s-r)/2)\Gamma((s-r+1)/2)\Gamma(r/s)\Gamma((r+1)/2)}\nonumber\\
&\quad\times\Gamma\bigg(\frac{s-r+\alpha+\beta}{2}\bigg)\Gamma\bigg(\frac{s-r-\alpha+\beta}{2}\bigg)\Gamma\bigg(\frac{s-r+\alpha-\beta}{2}\bigg)\Gamma\bigg(\frac{s-r-\alpha-\beta}{2}\bigg)\nonumber\\
&\quad\times \Gamma\bigg(\frac{r+\gamma+\delta}{2}\bigg)\Gamma\bigg(\frac{r-\gamma+\delta}{2}\bigg)\Gamma\bigg(\frac{r+\gamma-\delta}{2}\bigg)\Gamma\bigg(\frac{r-\gamma-\delta}{2}\bigg)\,\mathrm{d}r\nonumber\\
&=\frac{\pi}{8a^s}\cdot \frac{1}{2\pi\mathrm{i}}\int_{c-\mathrm{i}\infty}^{c+\mathrm{i}\infty}\bigg(\frac{a^2}{b^2}\bigg)^t\frac{1}{\Gamma(s/2-t)\Gamma((s+1)/2-t)\Gamma(t)\Gamma(t+1/2)}\nonumber\\
&\quad\times\Gamma\bigg(\frac{s+\alpha+\beta}{2}-t\bigg)\Gamma\bigg(\frac{s-\alpha+\beta}{2}-t\bigg)\Gamma\bigg(\frac{s+\alpha-\beta}{2}-t\bigg)\Gamma\bigg(\frac{s-\alpha-\beta}{2}-t\bigg)\nonumber\\
&\quad\times \Gamma\bigg(t+\frac{\gamma+\delta}{2}\bigg)\Gamma\bigg(t+\frac{-\gamma+\delta}{2}\bigg)\Gamma\bigg(t+\frac{\gamma-\delta}{2}\bigg)\Gamma\bigg(t+\frac{-\gamma-\delta}{2}\bigg)\,\mathrm{d}t \nonumber\\
&=\frac{\pi}{8a^s}G_{6,6}^{4,4}\bigg(\frac{a^2}{b^2}\;\bigg|\;{\frac{2-\gamma-\delta}{2},\frac{2+\gamma-\delta}{2},\frac{2+\gamma-\delta}{2},\frac{2+\gamma+\delta}{2},\frac{s}{2},\frac{s+1}{2} \atop \frac{s+\alpha+\beta}{2},\frac{s-\alpha+\beta}{2},\frac{s+\alpha-\beta}{2},\frac{s-\alpha-\beta}{2},1,\frac{1}{2}}\bigg), \nonumber
\end{align}
where in obtaining the second equality we made the change of variables $r=2t$ and in obtaining the final equality we used the contour integral representation (\ref{mdef}) of the Meijer $G$-function. We have thus derived formula (\ref{for1}).

\vspace{3mm}

\noindent 2. We begin by noting that, by an application of the integral formula 6.576(5) of \cite{g07}, the Mellin transform of the function $g_{\mu,\nu,a,d}(x)=K_\mu(a x)I_\nu(d x)$ is given by
\begin{align*}
\mathcal{M}\{g_{\mu,\nu,a,d}\}(s)&=\int_0^\infty x^{s-1} K_\mu(a x)I_\nu(d x)\,\mathrm{d}x\\
&=\frac{2^{s}d^\nu}{4a^{s+\nu}}\frac{\Gamma\big(\frac{s+\mu+\nu}{2}\big)\Gamma\big(\frac{s-\mu+\nu}{2}\big)}{\Gamma(\nu+1)}\,{}_2F_1\bigg(\frac{s+\mu+\nu}{2},\frac{s-\mu+\nu}{2};\nu+1;\frac{d^2}{a^2}\bigg),
\end{align*}
which exists for $\mathrm{Re}\,a>\mathrm{Re}\,d>0$ and $\mathrm{Re}\,s>\mathrm{Re}(|\mu|-\nu)$. Arguing as in part (i) we then have that
\begin{align}
I(d,a,b,b)&=\int_0^\infty x^{s-1} I_\alpha(d x) K_\beta(a x) K_\gamma(b x) K_\delta(b x) \,\mathrm{d}x\nonumber\\
&=\frac{1}{2\pi\mathrm{i}}\int_{c-\mathrm{i}\infty}^{c+\mathrm{i}\infty}\mathcal{M}\{g_{\beta,\alpha,a,d}\}(s-r)\mathcal{M}\{f_{\gamma,\delta,b}\}(r)\,\mathrm{d}r\nonumber\\
&=\frac{1}{2\pi\mathrm{i}}\int_{c-\mathrm{i}\infty}^{c+\mathrm{i}\infty}\frac{2^{s-5}}{a^s}\bigg(\frac{d}{a}\bigg)^\alpha\bigg(\frac{a}{b}\bigg)^r\frac{\Gamma\big(\frac{s-r+\alpha+\beta}{2}\big)\Gamma\big(\frac{s-r+\alpha-\beta}{2}\big)}{\Gamma(r)\Gamma(\alpha+1)}\nonumber\\
&\quad\times\Gamma\bigg(\frac{r+\gamma+\delta}{2}\bigg)\Gamma\bigg(\frac{r-\gamma+\delta}{2}\bigg)\Gamma\bigg(\frac{r+\gamma-\delta}{2}\bigg)\Gamma\bigg(\frac{r-\gamma-\delta}{2}\bigg)\nonumber\\
&\quad\times {}_2F_1\bigg(\frac{s-r+\alpha+\beta}{2},\frac{s-r+\alpha-\beta}{2};\alpha+1;\frac{d^2}{a^2}\bigg)\,\mathrm{d}r, \label{limad}
\end{align}
where $c>|\mathrm{Re}\,\beta|-\mathrm{Re}\,\alpha$. On letting $d\rightarrow a^{-}$ in (\ref{limad}) we obtain that
\begin{align}
I(a,a,b,b)&=\frac{1}{2\pi\mathrm{i}}\int_{c-\mathrm{i}\infty}^{c+\mathrm{i}\infty}\frac{2^{s-5}}{a^s}\bigg(\frac{a}{b}\bigg)^r\frac{\Gamma\big(\frac{s-r+\alpha+\beta}{2}\big)\Gamma\big(\frac{s-r+\alpha-\beta}{2}\big)}{\Gamma(r)\Gamma(\alpha+1)}\nonumber\\
&\quad\times\Gamma\bigg(\frac{r+\gamma+\delta}{2}\bigg)\Gamma\bigg(\frac{r-\gamma+\delta}{2}\bigg)\Gamma\bigg(\frac{r+\gamma-\delta}{2}\bigg)\Gamma\bigg(\frac{r-\gamma-\delta}{2}\bigg)\nonumber\\
&\quad\times {}_2F_1\bigg(\frac{s-r+\alpha+\beta}{2},\frac{s-r+\alpha-\beta}{2};\alpha+1;1\bigg)\,\mathrm{d}r \nonumber \\
&=\frac{1}{2\pi\mathrm{i}}\int_{c-\mathrm{i}\infty}^{c+\mathrm{i}\infty}\frac{2^{s-5}}{a^s}\bigg(\frac{a}{b}\bigg)^r\frac{\Gamma(r-s+1)}{\Gamma(r)}\frac{\Gamma\big(\frac{s-r+\alpha+\beta}{2}\big)\Gamma\big(\frac{s-r+\alpha-\beta}{2}\big)}{\Gamma\big(\frac{2-s+r+\alpha+\beta}{2}\big)\Gamma\big(\frac{2-s+r+\alpha-\beta}{2}\big)}\nonumber\\
&\quad\times\Gamma\bigg(\frac{r+\gamma+\delta}{2}\bigg)\Gamma\bigg(\frac{r-\gamma+\delta}{2}\bigg)\Gamma\bigg(\frac{r+\gamma-\delta}{2}\bigg)\Gamma\bigg(\frac{r-\gamma-\delta}{2}\bigg)\,\mathrm{d}r \nonumber \\
&=\frac{1}{8a^s}\cdot\frac{1}{2\pi\mathrm{i}}\int_{c-\mathrm{i}\infty}^{c+\mathrm{i}\infty}\bigg(\frac{a^2}{b^2}\bigg)^t\frac{\Gamma\big(t+\frac{1-s}{2}\big)\Gamma\big(t+\frac{2-s}{2}\big)\Gamma\big(\frac{s+\alpha+\beta}{2}-t\big)\Gamma\big(\frac{s+\alpha-\beta}{2}-t\big)}{\Gamma(t)\Gamma\big(t+\frac{1}{2}\big)\Gamma\big(\frac{2-s+\alpha+\beta}{2}+t\big)\Gamma\big(\frac{2-s+\alpha-\beta}{2}+t\big)}\nonumber\\
&\quad\times\Gamma\bigg(t+\frac{\gamma+\delta}{2}\bigg)\Gamma\bigg(t+\frac{-\gamma+\delta}{2}\bigg)\Gamma\bigg(t+\frac{\gamma-\delta}{2}\bigg)\Gamma\bigg(t-\frac{\gamma+\delta}{2}\bigg)\,\mathrm{d}t, \label{sept}
\end{align}
where we obtained the second equality using identity (\ref{2f11}) and we obtained the final equality by using the duplication formula (\ref{dupfor}) for the gamma function and the change of variables $r=2t$. Comparing the expression (\ref{sept}) and the integral representation (\ref{mdef}) of the $G$-function now yields formula (\ref{k3igen}).

\vspace{3mm}

\noindent 3. We begin by recalling the power series representation of the product of two modified Bessel functions of the first kind (\cite[equation 10.31.3]{olver}):
\begin{align*}
I_\nu(x)I_\mu(x)=\bigg(\frac{x}{2}\bigg)^{\nu+\mu}\sum_{k=0}^\infty\frac{(\nu+\mu+k+1)_k}{k!\Gamma(\nu+k+1)\Gamma(\mu+k+1)}\bigg(\frac{x}{2}\bigg)^{2k}.    
\end{align*}
On writing
\begin{equation*}
(a+k)_k=\frac{\Gamma(a+2k)}{\Gamma(a+k)}=\frac{2^{a+2k-1}\Gamma(k+\frac{a}{2})\Gamma(k+\frac{a+1}{2})}{\sqrt{\pi}\Gamma(k+a)}    
\end{equation*}
we can obtain an alternative expression for the product $I_\nu(x)I_\mu(x)$ that will be useful for our purposes:
\begin{align}
I_\nu(x)I_\mu(x)=\frac{1}{\sqrt{\pi}}\sum_{k=0}^\infty \frac{\Gamma\big(k+\frac{\nu+\mu+1}{2}\big)\Gamma\big(k+\frac{\nu+\mu+2}{2}\big)}{k!\Gamma(\nu+k+1)\Gamma(\mu+k+1)\Gamma(\nu+\mu+k+1)}  x^{2k+\mu+\nu}. \label{sept7}
\end{align}
Applying equation (\ref{sept7}) and an interchange in the order of integration and summation now yields the expression
\begin{align*}
I&=\int_0^\infty x^{s-1} I_\alpha(a x) I_\beta(a x) K_\gamma(b x) K_\delta(b x) \,\mathrm{d}x\\
&=\frac{1}{\sqrt{\pi}}\sum_{k=0}^\infty \frac{a^{2k+\lambda}\Gamma\big(k+\frac{\lambda+1}{2}\big)\Gamma\big(k+\frac{\lambda+2}{2}\big)}{k!\Gamma(\alpha+k+1)\Gamma(\beta+k+1)\Gamma(\lambda+k+1)}\int_0^\infty x^{2k+\lambda+s-1}  K_\gamma(b x) K_\delta(b x) \,\mathrm{d}x.
\end{align*}
Evaluating the integral using the Mellin transform formula (\ref{mellin1}) followed by an application of the duplication formula (\ref{dupfor}) gives that
\begin{align*}
I&=\frac{1}{8a^s\sqrt{\pi}}\sum_{k=0}^\infty \frac{\Gamma\big(k+\frac{\lambda+1}{2}\big)\Gamma\big(k+\frac{\lambda+2}{2}\big)}{k!\Gamma(k+\alpha+1)\Gamma(k+\beta+1)\Gamma(k+\lambda+1)\Gamma(2k+\lambda+s)}   \\
&\quad\times\Gamma\big(k+\tfrac{\lambda+\gamma+\delta}{2}\big)\Gamma\big(k+\tfrac{\lambda-\gamma+\delta}{2}\big)\Gamma\big(k+\tfrac{\lambda+\gamma-\delta}{2}\big)\Gamma\big(k+\tfrac{\lambda-\gamma-\delta}{2}\big)\bigg(\frac{2a}{b}\bigg)^{2k+\lambda+s}\\
&=\frac{1}{4a^s}\sum_{k=0}^\infty \frac{\Gamma\big(k+\frac{\lambda+1}{2}\big)\Gamma\big(k+\frac{\lambda+2}{2}\big)}{k!\Gamma(k+\alpha+1)\Gamma(k+\beta+1)\Gamma(k+\lambda+1)\Gamma\big(k+\frac{\lambda+s}{2}\big)\Gamma\big(k+\frac{\lambda+s+1}{2}\big)}   \\
&\quad\times\Gamma\big(k+\tfrac{\lambda+\gamma+\delta}{2}\big)\Gamma\big(k+\tfrac{\lambda-\gamma+\delta}{2}\big)\Gamma\big(k+\tfrac{\lambda+\gamma-\delta}{2}\big)\Gamma\big(k+\tfrac{\lambda-\gamma-\delta}{2}\big)\bigg(\frac{a}{b}\bigg)^{2k+\lambda+s}.
\end{align*}
By expressing the gamma functions in terms of Pochammer functions using the relation $\Gamma(u+k)=\Gamma(u)(u)_k$ and comparing the resulting expression to the series representation of the regularized generalized hypergeometric function (see (\ref{gauss}) and (\ref{gauss2})) we obtain the desired formula (\ref{iikkgen}) for the integral.

\vspace{3mm}

\noindent 4. This is a special case of formula (\ref{thm1.2for}) of Theorem \ref{thm1.2}.
\end{proof}

\begin{proof}[Proof of Theorem \ref{thm1.2}] Using the power series representation (\ref{idef}) of the modified Bessel function and an interchange in the order of integration and summation yields the expression
\begin{align*}
I&=\int_0^\infty x^{s-1}\bigg\{\prod_{k=1}^n I_{\alpha_k}(a_kx)\bigg\}K_\beta(bx)\,\mathrm{d}x\\
&=\sum_{k_1=0}^\infty\cdots\sum_{k_n=0}^\infty\frac{1}{k_1!\Gamma(\alpha_1+k_1+1)\cdots k_n!\Gamma(\alpha_n+k_n+1)}\\
&\quad\times\int_0^\infty x^{s-1}\bigg(\frac{a_1x}{2}\bigg)^{\alpha_1+2k_1}\cdots \bigg(\frac{a_nx}{2}\bigg)^{\alpha_n+2k_n} K_\beta(bx)\,\mathrm{d}x.
\end{align*}
Evaluating the integral using the integral formula
\begin{equation*}
\int_0^\infty x^{\mu-1}K_\nu(x)\,\mathrm{d}x=2^{\mu-2}\Gamma\bigg(\frac{\mu-\nu}{2}\bigg) \Gamma\bigg(\frac{\mu+\nu}{2}\bigg), \quad \mathrm{Re}\,\nu<\mathrm{Re}\,\mu   
\end{equation*}
(see \cite[equation 10.43.19]{olver}) gives that
\begin{align*}
I&=\frac{2^{s-2}}{b^s}\sum_{k_1=0}^\infty\cdots\sum_{k_n=0}^\infty \frac{\Gamma\big(k_1+\cdots+k_n+\frac{s+\alpha_1+\cdots+\alpha_n+\beta}{2}\big)\Gamma\big(k_1+\cdots+k_n+\frac{s+\alpha_1+\cdots+\alpha_n-\beta}{2}\big)}{\Gamma(\alpha_1+k_1+1)\cdots \Gamma(\alpha_n+k_n+1)(k_1!\cdots k_n!)} \\
&\quad \times \bigg(\frac{a_1}{b^2}\bigg)^{\alpha_1+k_1}\cdots \bigg(\frac{a_n}{b^2}\bigg)^{\alpha_n+k_n}.
\end{align*}
By expressing the gamma functions in terms of Pochammer functions using the relation $\Gamma(u+k)=\Gamma(u)(u)_k$ and comparing the resulting expression to the series representation (\ref{hyp}) of the Lauricella function $F_C$ yields the integral formula (\ref{thm1.2for}).
\end{proof}

\begin{proof}[Proof of Theorem \ref{cor1.3}] 1. By applying formula (\ref{fgh}) to the integral formula (\ref{corf1}) (with $\alpha=\beta$ and $s=1$) we get that
\begin{align}
&\int_0^\infty K_\alpha^2(a x) K_\alpha^2(b x) \,\mathrm{d}x=\frac{\pi}{8a}G_{4,4}^{3,3}\bigg(\frac{a^2}{b^2}\;\bigg|\;{1+\alpha,1-\alpha,1,1 \atop \frac{s}{2}+\alpha,\frac{s}{2}-\alpha,\frac{1}{2},\frac{1}{2}}\bigg)\nonumber\\
&\quad=\frac{\pi}{8a}\bigg\{\frac{\Gamma(-\alpha)\Gamma(-2\alpha)\Gamma(\frac{1}{2})\Gamma(\frac{1}{2}+\alpha)\Gamma(\frac{1}{2}+2\alpha)}{\Gamma(1+\alpha)\Gamma(\frac{1}{2}-\alpha)}\bigg(\frac{a}{2}\bigg)^{1+2\alpha} {}_4F_3\bigg({\frac{1}{2},\alpha+\frac{1}{2},\alpha+\frac{1}{2},2\alpha+\frac{1}{2} \atop 2\alpha+1,\alpha+1,\alpha+1}\;\bigg|\;\frac{a^2}{b^2}\bigg)\nonumber\\
&\quad\quad+\frac{\Gamma(\alpha)\Gamma(2\alpha)\Gamma(\frac{1}{2})\Gamma(\frac{1}{2}-\alpha)\Gamma(\frac{1}{2}-2\alpha)}{\Gamma(1-\alpha)\Gamma(\frac{1}{2}+\alpha)}\bigg(\frac{a}{b}\bigg)^{1-2\alpha}{}_4F_3\bigg({\frac{1}{2},\frac{1}{2}-2\alpha,\frac{1}{2}-\alpha,\frac{1}{2}-\alpha \atop 1-2\alpha,1-\alpha,1-\alpha}\;\bigg|\;\frac{a^2}{b^2}\bigg)\nonumber\\
&\quad\quad+\frac{\Gamma(\alpha)\Gamma(-\alpha)\Gamma(\frac{1}{2})\Gamma(\frac{1}{2}-\alpha)\Gamma(\frac{1}{2}+\alpha)}{\Gamma(1)\Gamma(\frac{1}{2})}\bigg(\frac{a}{b}\bigg){}_4F_3\bigg({\frac{1}{2},\frac{1}{2},\alpha+\frac{1}{2},\frac{1}{2}-\alpha \atop 1,1-\alpha,\alpha+1}\;\bigg|\;\frac{a^2}{b^2}\bigg)\bigg\}.\label{ratio1}
\end{align}
We obtain formula (\ref{slv1}) by simplifying the ratios of gamma functions given in (\ref{ratio1}) by using the reflection formula
\begin{align}
\Gamma(z)\Gamma(1-z)=\frac{\pi}{\sin(\pi z)}, \quad z\notin\mathbb{Z}, \label{ref0}   
\end{align}
(see \cite[equation 5.5.3]{olver}) from which we get that
\begin{align}
\Gamma(\tfrac{1}{2}+z)\Gamma(\tfrac{1}{2}-z)=\frac{\pi}{\cos(\pi z)}, \quad z+1/2\notin\mathbb{Z},   \label{refl} 
\end{align}
the duplication formula (\ref{dupfor}), the standard special values $\Gamma(1/2)=\sqrt{\pi}$, $\Gamma(1)=1$ and the identity $\sin(2\pi z)=2\sin(\pi z)\cos(\pi z)$.

\vspace{3mm}

\noindent 2. We derive the integral formula (\ref{forab}) by a similar approach to our derivation of formula (\ref{slv1}), but proceed more efficiently, omitting some details of the calculations. By applying the reduction formula (\ref{lukeformula0}) to the $G$-function in the integral formula (\ref{corf1}) (with $\alpha=\beta$ and $s=2$) we get that
\begin{align}
\int_0^\infty x K_\alpha^2(a x) K_\alpha^2(b x) \,\mathrm{d}x=\frac{\pi}{8a^2}G_{4,4}^{3,3}\bigg(\frac{a^2}{b^2}\;\bigg|\;{1+\alpha,1-\alpha,1,\frac{3}{2} \atop 1+\alpha,1-\alpha,1,\frac{1}{2}}\bigg).  \label{qc} 
\end{align}
We can apply equation (\ref{fgh}) to express the $G$-function in (\ref{qc}) as a sum of three ${}_4F_3$ functions, in a similar manner to how we proceeded in deriving the integral formula (\ref{slv1}). Here, due to repeated factors the ${}_4F_3$ functions reduce to ${}_2F_1$ functions by the reduction formula (\ref{fred}). As an example, we have that
\begin{align*}
{}_4F_3\bigg({1-\alpha,1+\alpha,1,\frac{1}{2} \atop 1-\alpha,1+\alpha,\frac{3}{2}}\;\bigg|\;\frac{a^2}{b^2}\bigg)&={}_2F_1\bigg({1,\frac{1}{2} \atop \frac{3}{2}}\;\bigg|\;\frac{a^2}{b^2}\bigg)=\frac{b}{a}\tanh^{-1}\bigg(\frac{a}{b}\bigg),   
\end{align*}
where the second equality follows from the reduction formula (\ref{2f1redtanh}). With these considerations we obtain the desired formula (\ref{forab}).

We derive the integral formula (\ref{foraa}) by taking the limit $a/b\rightarrow1^{-}$ in (\ref{forab}). To this end, we will apply the asymptotic approximation (\ref{2f1exp}) together with the basic asymptotic approximations
\begin{align}\ln(1-z^2)=\ln(1-z)+\ln(1+z)= \ln(1-z)+\ln(2)+o(1), \quad z\rightarrow 1^{-}, \label{lll0}
\end{align}
and
\begin{align}
\tanh^{-1}(z)=\frac{1}{2}\big(\ln(1+z)-\ln(1-z)\big)=\frac{1}{2}\big(\ln(2)-\ln(1-z)\big)+o(1), \quad z\rightarrow 1^{-}.   \label{lll2}  
\end{align}
Denoting the integral in equation (\ref{forab}) by $I(\alpha,z)$, where $z=a/b$, we then get that
\begin{align*}
\lim_{z\rightarrow 1^{-}} I(\alpha,z)&= \frac{\pi^2}{8a^2}\csc^2(\pi\alpha)\lim_{z\rightarrow 1^{-}}\bigg\{-\frac{1}{1+2\alpha}\frac{\Gamma(\alpha+\frac{3}{2})}{\Gamma(\alpha+\frac{1}{2})}\bigg(\ln(1-z^2)+\psi(1)+\psi\bigg(\alpha+\frac{1}{2}\bigg)+2\gamma\bigg)\\
&\quad-\frac{1}{1-2\alpha}\frac{\Gamma(\frac{3}{2}-\alpha)}{\Gamma(\frac{1}{2}-\alpha)}\bigg(\ln(1-z^2)+\psi(1)+\psi\bigg(\frac{1}{2}-\alpha\bigg)+2\gamma\bigg)-2\tanh^{-1}(z)\bigg\}\\
&=-\frac{\pi^2}{16a^2}\mathrm{csc}^2(\pi\alpha)\bigg\{\psi\bigg(\frac{1}{2}-\alpha\bigg)+\psi\bigg(\frac{1}{2}+\alpha\bigg)+4\ln(2)+2\gamma\bigg\},
\end{align*}
where we obtained the second equality by using that $\Gamma(3/2\pm\alpha)/\Gamma(1/2\pm\alpha)=1/2\pm\alpha$, the asymptotic approximations (\ref{lll0}) and (\ref{lll2}) and the standard formula $\psi(1)=-\gamma$.

We now derive the integral formula (\ref{li2}). In addition to our assumption that $\mathrm{Re}\,a,\mathrm{Re}\,b>0$ we will also assume that $a\not=b$ and $|a/b|\leq1$. We will derive the integral formula (\ref{li2}) by taking the limit $\alpha\rightarrow0$ in the integral formula (\ref{forab}). We begin by noting that 
\begin{equation}\csc^2(\pi\alpha)= \frac{1}{\pi^2\alpha^2}+O(1), \quad \alpha\rightarrow0. \label{wag1}
\end{equation}
To ease notation, we will let $z=a/b$ for the remainder of this proof. We now find an asymptotic expansion for 
\begin{align*}
J(\alpha,z):=\frac{z^{2\alpha}}{1+2\alpha}\,{}_2F_1\bigg({1,\frac{1}{2}+\alpha \atop \frac{3}{2}+\alpha}\;\bigg|\;z^2\bigg)+\frac{z^{-2\alpha}}{1-2\alpha}\,{}_2F_1\bigg({1,\frac{1}{2}-\alpha \atop \frac{3}{2}-\alpha}\;\bigg|\;z^2\bigg)-\frac{2}{z}\tanh^{-1}(z).    
\end{align*}
A straightforward asymptotic analysis reveals that, as $\alpha\rightarrow0$,
\begin{align}
\frac{z^{2\alpha}}{1+2\alpha}= 1+2(\ln(z)-1)\alpha +2(\ln^2(z)-2\ln(z)+2)\alpha^2+O(\alpha^3).  \label{gld1}
\end{align}
We now turn to the asymptotic analysis of the hypergeometric functions. We begin by noting that
\begin{align}
\frac{(\frac{1}{2}+\alpha)_n}{(\frac{3}{2}+\alpha)_n}=\frac{1+2\alpha}{2n+1+2\alpha}=\frac{1}{2n+1}+\frac{4n\alpha}{(2n+1)^2}-\frac{8n\alpha^2}{(2n+1)^3}+O(\alpha^3), \quad \alpha\rightarrow0. \label{gld2}
\end{align}
Therefore, as $\alpha\rightarrow0$,
\begin{align*}
{}_2F_1\bigg({1,\frac{1}{2}+\alpha \atop \frac{3}{2}+\alpha}\;\bigg|\;z^2\bigg)= S_0(z)+4S_1(z)\alpha-8S_2(z)\alpha^2+O(\alpha^3),    
\end{align*}
where
\begin{align*}
S_0(z)=\sum_{n=0}^\infty\frac{z^{2n}}{2n+1}=\frac{\tanh^{-1}(z)}{z}, \quad S_1(z)=\sum_{n=0}^\infty\frac{nz^{2n}}{(2n+1)^2}, \quad S_2(z)=\sum_{n=0}^\infty\frac{nz^{2n}}{(2n+1)^3}.    
\end{align*}
(Note that these infinite series are convergent under our assumption that $z\in\mathbb{C}$ is such that $z>0$, $z\not=1$ and $|z|\leq1$.) 
Putting this together we have that, as $\alpha\rightarrow0$,
\begin{align}
J(\alpha,z)&= \big(1+2(\ln(z)-1)\alpha +2(\ln^2(z)-2\ln(z)+2)\alpha^2\big)\big(S_0(z)+4S_1(z)\alpha-8S_2(z)\alpha^2\big)\nonumber\\
&\quad+\big(1-2(\ln(z)-1)\alpha +2(\ln^2(z)-2\ln(z)+2)\alpha^2\big)\big(S_0(z)-4S_1(z)\alpha-8S_2(z)\alpha^2\big)\nonumber\\
&\quad-\frac{2}{z}\tanh^{-1}(z)+O(\alpha^3)\nonumber\\
&=T(z)\alpha^2+O(\alpha^3), \label{wag2}
\end{align}
where
\begin{align*}
T(z)=4\big((\ln^2(z)-2\ln(z)+2)S_0(z)+4(\ln(z)-1)S_1(z))-4S_2(z)\big).    
\end{align*}
From the asymptotic expansions (\ref{wag1}) and (\ref{wag2}) we finally obtain that
\begin{align}\label{lim24}
\int_0^\infty xK_0^2(a x) K_0^2(b x) \,\mathrm{d}x=\frac{\pi^2}{8b^2}\lim_{\alpha\rightarrow0}\csc^2(\pi\alpha)J(\alpha,z)=\frac{\pi^2}{8b^2}\cdot\frac{1}{\pi^2}\cdot T(z)= \frac{1}{8b^2}T(z).  
\end{align}

To complete the derivation of the integral formula (\ref{li2}) it just remains to find closed-form formulas for the series $S_1(z)$ and $S_2(z)$. To this end, we note the formula
\begin{align*}
\sum_{n=0}^\infty\frac{z^{2n}}{(2n+1)^s}=\frac{1}{z}\sum_{n=0}^\infty\frac{z^{2n+1}}{(2n+1)^s}=\frac{1}{2z}\big[\mathrm{Li}_s(z)-\mathrm{Li}_s(-z)\big], \quad \mathrm{Re}\,s>1,\: |z|\leq1.    
\end{align*}
We then get that
\begin{align}\label{lim25}
S_1(z)&=\sum_{n=0}^\infty\frac{nz^{2n}}{(2n+1)^2}=\frac{1}{2}\bigg[\sum_{n=0}^\infty\frac{z^{2n}}{2n+1}-\sum_{n=0}^\infty\frac{z^{2n}}{(2n+1)^2}\bigg]\nonumber\\
&=\frac{1}{4z}\big[2\tanh^{-1}(z)-(\mathrm{Li}_2(z)-\mathrm{Li}_2(-z))]    
\end{align}
and, similarly,
\begin{align}\label{lim26}
S_2(z)&=\frac{1}{2}\bigg[\sum_{n=0}^\infty\frac{z^{2n}}{(2n+1)^2}-\sum_{n=0}^\infty\frac{z^{2n}}{(2n+1)^3}\bigg]\nonumber\\
&=\frac{1}{4z}\big[\mathrm{Li}_2(z)-\mathrm{Li}_2(-z)-(\mathrm{Li}_3(z)-\mathrm{Li}_3(-z))].   
\end{align}
On combining equations (\ref{lim24}), (\ref{lim25}) and (\ref{lim26}) and simplifying we obtain the integral formula (\ref{li2}).

Finally, we obtain the integral formula (\ref{fox1}) by taking the limit $a/b\rightarrow 1^{-}$ in the integral formula (\ref{li2}) and using the basic limit $\lim_{z\rightarrow1^{-}}\ln^2(z)\tanh^{-1}(z)=0$ and the standard formulas $\mathrm{Li}_3(1)=\zeta(3)$ and $\mathrm{Li}_3(-1)=-3\zeta(3)/4$.
\end{proof}

\begin{proof}[Proof of Theorem \ref{cor2.5}] 1. Applying equation (\ref{fgh}) to the integral formula (\ref{corf2}) (with $\alpha=\beta$ and $s=1$) gives that
\begin{align*}
&\int_0^\infty I_\alpha(a x)K_\alpha(a x) K_\alpha^2(b x) \,\mathrm{d}x \\
&\quad=\frac{1}{8a}G_{4,4}^{2,4}\bigg(\frac{a^2}{b^2}\;\bigg|\;{1+\alpha,1-\alpha,1,1 \atop \frac{1}{2}+\alpha,\frac{1}{2},\frac{1}{2}-\alpha,\frac{1}{2}}\bigg) \\
&\quad=\frac{1}{8a}\bigg\{\frac{\Gamma(\alpha)\Gamma^2(\frac{1}{2})\Gamma(\frac{1}{2}+\alpha)\Gamma(\frac{1}{2}-\alpha)}{\Gamma(\alpha+1)}\bigg(\frac{a}{b}\bigg)\, {}_4F_3\bigg({\frac{1}{2},\frac{1}{2},\alpha+\frac{1}{2},\frac{1}{2}-\alpha \atop 1,\alpha+1,1-\alpha}\;\bigg|\;\frac{a^2}{b^2}\bigg)\nonumber\\
&\quad\quad+\frac{\Gamma(-\alpha)\Gamma(\frac{1}{2})\Gamma^2(\alpha+\frac{1}{2})\Gamma(2\alpha+\frac{1}{2})}{\Gamma(\alpha+1)\Gamma(2\alpha+1)}\bigg(\frac{a}{b}\bigg)^{2\alpha+1} {}_4F_3\bigg({\frac{1}{2},\alpha+\frac{1}{2},\alpha+\frac{1}{2},2\alpha+\frac{1}{2} \atop \alpha+1,\alpha+1,2\alpha+1}\;\bigg|\;\frac{a^2}{b^2}\bigg)\bigg\}.  
\end{align*}
We obtain the integral formula (\ref{sch1}) by simplifying the ratios of gamma functions in the above expression. We simplify the first ratio by using the standard formulas $\Gamma(1/2)=\sqrt{\pi}$ and $\Gamma(z+1)=z\Gamma(z)$ and the identity (\ref{refl}), whilst we simplify the second ratio by using the special value $\Gamma(1/2)=\sqrt{\pi}$, the reflection formula (\ref{ref0}) and the duplication formula (\ref{dupfor}).

\vspace{3mm}

\noindent 2. From the integral formula (\ref{corf2}) (with $\alpha=\beta$ and $s=2$) we get that
\begin{align}
\int_0^\infty x I_\alpha(ax)K_\alpha(ax)K_\alpha^2(bx)\,\mathrm{d}x&=\frac{1}{8a^{2}}G_{4,4}^{2,4}\bigg(\frac{a^2}{b^2}\;\bigg|\;{1+\alpha,1-\alpha,1,\frac{3}{2} \atop 1+\alpha,1,1-\alpha,\frac{1}{2}}\bigg)\nonumber\\
&=\frac{1}{8a^{2}}G_{3,3}^{2,3}\bigg(\frac{a^2}{b^2}\;\bigg|\;{1+\alpha,1,\frac{3}{2} \atop 1+\alpha,1,\frac{1}{2}}\bigg), \label{tanhh}
\end{align}
where we reduced the order of the $G$-function in the second step using equation (\ref{lukeformula0}). We obtain the integral formula (\ref{forab2}) by applying the reduction formula (\ref{fgh}) to the $G$-function in equation (\ref{tanhh}). This is carried out similarly to in part 1 of this proof, so we omit the details. However, we do note that due to repeated factors the ${}_3F_2$ functions reduce to ${}_2F_1$ functions by formula (\ref{fred}), an example of which is given by
\begin{align*}
{}_3F_2\bigg({1-\alpha,1,\frac{1}{2} \atop 1-\alpha,\frac{3}{2}}\;\bigg|\;\frac{a^2}{b^2}\bigg)&={}_2F_1\bigg({1,\frac{1}{2} \atop \frac{3}{2}}\;\bigg|\;\frac{a^2}{b^2}\bigg)=\frac{b}{a}\tanh^{-1}\bigg(\frac{a}{b}\bigg),   
\end{align*}
where we used the reduction formula (\ref{2f1redtanh}) in the second step.

We obtain the integral formula (\ref{foraa2}) by taking the limit $a/b\rightarrow1^{-}$ in in the integral formula (\ref{forab2}) by applying the asymptotic expansion (\ref{2f1exp}). This procedure is very similar to the derivation of the integral formula (\ref{forab}) from the integral formula (\ref{foraa}) that was given in the proof of Theorem \ref{cor1.3}, and we therefore omit the details. 

We now derive the integral formula (\ref{lili}). In addition to our assumption that $\mathrm{Re}\,a,\mathrm{Re}\,b>0$ we now also suppose that $a\not=b$ and $|a/b|\leq1$.
We will derive the integral formula (\ref{lili}) by taking the limit $\alpha\rightarrow0$ in the integral formula (\ref{forab2}). Recall the notation $S_0(z)$ and $S_1(z)$ from the proof of Theorem \ref{cor1.3} and let $z=a/b$. Then, by applying the asymptotic expansions (\ref{gld1}) and (\ref{gld2}) we obtain that, as $\alpha\rightarrow0$,
\begin{align}
\mathcal{J}(\alpha,z)&:=\frac{\tanh^{-1}(z)}{z}-\frac{z^{2\alpha}}{1+2\alpha}\,{}_2F_1\bigg({1,\frac{1}{2}+\alpha \atop \frac{3}{2}+\alpha}\;\bigg|\;z^2\bigg)\nonumber\\
&=\frac{\tanh^{-1}(z)}{z}-\big(1+2(\ln(z)-1)\alpha\big)\big(S_0(z)+4S_1(z)\alpha\big)+O(\alpha^2)\nonumber\\
&=2\big((1-\ln(z))S_0(z)-2S_1(z)\big)\alpha+O(\alpha^2).\label{gld3}
\end{align}
We also have that
\begin{align}
\csc(\pi\alpha)=\frac{1}{\pi\alpha}+O(\alpha), \quad \alpha\rightarrow0. \label{gld4}   
\end{align}
By applying the asymptotic expansions (\ref{gld3}) and (\ref{gld4}) to the integral formula (\ref{forab2}) we obtain that
\begin{align*}
\int_0^\infty  x I_0(ax)K_0(ax)K_0^2(bx) \,\mathrm{d}x&=\frac{\pi}{4b^2}\lim_{\alpha\rightarrow0}\csc(\pi\alpha)\mathcal{J}(\alpha,z)\\
&=\frac{\pi}{4b^2}\cdot\frac{1}{\pi}\cdot2\big((1-\ln(z))S_0(z)-2S_1(z)\big)\\
&=\frac{1}{2b^2}\big((1-\ln(z))S_0(z)-2S_1(z)\big),
\end{align*}
and on using that $S_0(z)=z^{-1}\tanh^{-1}(z)$ and the formula (\ref{lim25}) for the sum $S_1(z)$ we obtain the desired integral formula (\ref{lili}).

We now derive the integral formula (\ref{lilix}), which we do so by adapting the derivation of the integral formula (\ref{lili}). We now suppose that $a\not=b$ and $|a/b|\geq1$, that is $z\not=1$ and $|z|\geq1$ (along with the standing assumption that $\mathrm{Re}\,z>0$). We begin by noting that from identity (\ref{apack}) we have that
\begin{align*}
{}_2F_1\bigg({1,\frac{1}{2}+\alpha \atop \frac{3}{2}+\alpha}\;\bigg|\;z^2\bigg)&=\frac{\Gamma(\frac{3}{2}+\alpha)\Gamma(\alpha-\frac{1}{2})}{\Gamma^2(\frac{1}{2}+\alpha)}(-z^2)^{-1}\,{}_2F_1\bigg({1,\frac{1}{2}-\alpha \atop \frac{3}{2}-\alpha}\;\bigg|\;\frac{1}{z^2}\bigg) \\
&\quad+\Gamma(\tfrac{3}{2}+\alpha)\Gamma(\tfrac{1}{2}-\alpha)(-z^2)^{-1/2-\alpha}\\
&=\frac{1+2\alpha}{1-2\alpha}z^{-2}\,{}_2F_1\bigg({1,\frac{1}{2}-\alpha \atop \frac{3}{2}-\alpha}\;\bigg|\;\frac{1}{z^2}\bigg)+\frac{\pi(1+2\alpha)}{2\cos(\pi\alpha)}\mathrm{e}^{-\mathrm{i}\pi(1/2+\alpha)}z^{-1-2\alpha},
\end{align*}
where we used our usual arguments to simplify the ratio and product of gamma functions. Since
\begin{align*}
\mathrm{e}^{-\mathrm{i}\pi(1/2+\alpha)}=\mathrm{e}^{-\mathrm{i}\pi/2}\mathrm{e}^{-\mathrm{i}\pi\alpha}=-\mathrm{i}\mathrm{e}^{-\mathrm{i}\pi\alpha}=-\mathrm{i}\cos(\pi\alpha)-\sin(\pi\alpha),  
\end{align*}
we have that
\begin{align*}
\mathcal{J}(\alpha,z)&=\frac{1}{z^2}\bigg\{z\bigg(\tanh^{-1}(z)+\frac{\mathrm{i}\pi}{2}+\frac{\pi}{2}\tan(\pi\alpha)\bigg)-\frac{z^{2\alpha}}{1-2\alpha}\,{}_2F_1\bigg({1,\frac{1}{2}-\alpha \atop \frac{3}{2}-\alpha}\;\bigg|\;\frac{1}{z^2}\bigg)\bigg\}   \\
&=\frac{1}{z^2}\bigg\{z\bigg(\tanh^{-1}\bigg(\frac{1}{z}\bigg)+\frac{\pi}{2}\tan(\pi\alpha)\bigg)-\frac{z^{2\alpha}}{1-2\alpha}\,{}_2F_1\bigg({1,\frac{1}{2}-\alpha \atop \frac{3}{2}-\alpha}\;\bigg|\;\frac{1}{z^2}\bigg)\bigg\}.
\end{align*}
Let $w=1/z$. Then arguing as before we have that, as $\alpha\rightarrow0$,
\begin{align}
\mathcal{J}(\alpha,z)&=w^2\bigg\{\frac{\tanh^{-1}(w)}{w}+\frac{\pi\tan(\pi\alpha)}{2w}-(1+2\alpha)\big(1-2\ln(w)\alpha\big)\big(S_0(w)-4S_1(w)\alpha\big)\bigg\}+O(\alpha^2)\nonumber\\
&=w^2\bigg\{\frac{\pi^2}{2w}+2\big(2S_1(w)+(\ln(w)-1)S_0(w)\big)\bigg\}\alpha+O(\alpha^2).   \label{gld5}
\end{align}
(Note that $\mathrm{Re}\,w>0$, $|w|\leq1$ and $w\not=1$, which ensures that the series $S_0(w)$ and $S_1(w)$ are convergent.)
Applying the asymptotic expansions (\ref{gld4}) and (\ref{gld5}) to the integral formula (\ref{forab2}) now yields
\begin{align*}
\int_0^\infty  x I_0(ax)K_0(ax)K_0^2(bx) \,\mathrm{d}x&=\frac{\pi}{4b^2}\lim_{\alpha\rightarrow0}\csc(\pi\alpha)\mathcal{J}(\alpha,z)\\
&=\frac{\pi}{4b^2}\cdot\frac{1}{\pi}\cdot \bigg(\frac{\pi^2w}{2}+2w^2\big(2S_1(w)+(\ln(w)-1)S_0(w)\big)\bigg)\\
&=\frac{\pi^2}{8ab}+\frac{1}{2a^2}\big(2S_1(w)+(\ln(w)-1)S_0(w)\big),
\end{align*}
and by using that $S_0(z)=z^{-1}\tanh^{-1}(z)$ and the formula (\ref{lim25}) for the sum $S_1(z)$ we get the integral formula (\ref{lilix}).

Finally, we deduce the integral formula (\ref{fox2}) by taking the limit $a/b\rightarrow 1^{-}$ in the integral formula (\ref{lili}) and using the limit $\lim_{z\rightarrow1^{-}}\ln(z)\tanh^{-1}(z)=0$ and the standard special values of the dilogarithm $\mathrm{Li}_2(1)=\pi^2/6$ and $\mathrm{Li}_2(-1)=-\pi^2/12$.
\end{proof}

\begin{proof}[Proof of Theorem \ref{cor2.6}] 1. Setting $\alpha=\beta$ and $s=1$ in the integral formula (\ref{corf3}) gives that
\begin{align*}
\int_0^\infty I_\alpha^2(a x) K_\alpha^2(b x) \,\mathrm{d}x
&= \frac{1}{4a^2}\bigg(\frac{a}{b}\bigg)^{2\alpha+1}\frac{\Gamma^2(\alpha+\frac{1}{2})\Gamma(2\alpha+1)\Gamma(\frac{1}{2})}{\Gamma^2(\alpha+1)\Gamma(2\alpha+1)}\\
&\quad\times {}_4F_3\bigg({\alpha+\frac{1}{2},2\alpha+\frac{1}{2},\alpha+\frac{1}{2},\frac{1}{2} \atop \alpha+1,2\alpha+1,\alpha+1}\;\bigg|\;\frac{a^2}{b^2}\bigg),
\end{align*}
and simplifying the ratio of gamma functions using the duplication formula (\ref{dupfor}) and the standard formula $\Gamma(1/2)=\sqrt{\pi}$ yields the desired formula (\ref{haw}).

\vspace{3mm}

\noindent 2. We now set $\alpha=\beta$ and $s=2$ in the integral formula (\ref{corf3}) and apply the reduction formula (\ref{fred}) to obtain that
\begin{align*}
\int_0^\infty xI_\alpha^2(a x) K_\alpha^2(b x) \,\mathrm{d}x&=\frac{1}{4a^2}\bigg(\frac{a}{b}\bigg)^{2\alpha+2}\frac{\Gamma(\alpha+\frac{1}{2})}{\Gamma(\alpha+\frac{3}{2})}\,{}_4F_3\bigg({\alpha+\frac{1}{2},2\alpha+1,\alpha+1,1 \atop \alpha+1,2\alpha+1,\alpha+\frac{3}{2}}\;\bigg|\;\frac{a^2}{b^2}\bigg)\\
&=\frac{(a/b)^{2\alpha}}{2b^2(1+2\alpha)}\,{}_2F_1\bigg({\alpha+\frac{1}{2},1 \atop \alpha+\frac{3}{2}}\;\bigg| \;\frac{a^2}{b^2}\bigg),   
\end{align*}
as required.
\end{proof}


\appendix
\section{
Special functions
}\label{appa}

In this appendix, we define the modified Bessel functions of the first and second kind, the generalized hypergeometric function, the Lauricella function $F_C$ and the Meijer $G$-function, and also state some of their relevant basic properties. Unless otherwise stated, all these properties can be found in the standard references \cite{luke,olver}.

\subsection{Modified Bessel functions}\label{appa1}

The \emph{modified Bessel function of the first kind} is defined, for $\nu\in\mathbb{C}$ and $z\in\mathbb{C}$, by the power series
\begin{equation}\label{idef}
I_\nu(z)=\sum_{k=0}^\infty \frac{1}{k!\Gamma(k+\nu+1)}\bigg(\frac{z}{2}\bigg)^{2k+\nu}.    
\end{equation}
The \emph{modified Bessel function of the first kind} can be defined, for $\nu\in\mathbb{C}$ and $\mathrm{Re}\,z>0$, by the integral
\begin{equation*}
K_\nu(z)=\int_0^\infty\mathrm{e}^{-z\cosh t}\cosh(\nu t)\,\mathrm{d}t.   
\end{equation*}

For $n\in\mathbb{Z}$, $\nu\in\mathbb{C}$ and $z\in\mathbb{C}$, the following identities hold:
\begin{align} \label{ipar} I_{-n}(z)&=I_n(z), \\
\label{par} K_{-\nu}(z)&=K_{\nu}(z),
\end{align}
and the relation (\ref{ipar}) generalises to
\begin{align}
I_{-\nu}(z)=I_\nu(z)+\frac{2\sin(\pi\nu)}{\pi}K_\nu(z), \quad \nu\in\mathbb{C}. \label{ikrel}   
\end{align}
Let $\mathrm{Re}\,z>0$ and fix $\nu\in\mathbb{C}$. Then the functions $I_\nu(z)$ and $K_\nu(z)$ possess the following asymptotic behaviour:
\begin{align}\label{Itend0}I_{\nu} (z) &\sim \frac{(\frac{1}{2}z)^\nu}{\Gamma(\nu +1)}, 
\quad z \rightarrow 0, \: \nu\notin\{-1,-2,-3,\ldots\}, \\
\label{Ktend0}K_{\nu} (z) &\sim 2^{\nu -1} \Gamma (\nu) z^{-\nu},  \quad z \rightarrow 0, \quad \mathrm{Re}\,\nu>0,\\
\label{Ktend00}K_0(z)&\sim -\ln(z), \quad z\rightarrow0, \\
\label{Itendinfinity}I_{\nu} (z) &\sim \frac{\mathrm{e}^{z}}{\sqrt{2\pi z}}, \quad z \rightarrow\infty, \\
\label{Ktendinfinity} K_{\nu} (z) &\sim \sqrt{\frac{\pi}{2z}} \mathrm{e}^{-z}, \quad z \rightarrow \infty.
\end{align}
When $\nu\in\{-1,-2,-3,\ldots\}$, we can use the relation (\ref{ipar}) to establish the limiting behaviour of $I_\nu(z)$ as $z\rightarrow0$, and when $\mathrm{Re}\,\nu<0$ we can use the relation (\ref{par}) to infer the limiting behaviour of $K_\nu(z)$ as $z\rightarrow0$. If $\nu\not=0$ is purely imaginary then $K_\nu(z)=O(1)$ as $z\rightarrow0$ (see \cite[equations 10.45.2 and 10.45.7]{olver}).

\subsection{Generalized hypergeometric and related functions}

The \emph{generalized hypergeometric function} is defined by the power series
\begin{equation}
\label{gauss}
{}_pF_q\bigg({a_1,\ldots,a_p \atop b_1,\ldots,b_q} \; \bigg| \;z\bigg)=\sum_{k=0}^\infty\frac{(a_1)_k\cdots(a_p)_k}{(b_1)_k\cdots(b_q)_k}\frac{z^k}{k!},
\end{equation}
for $|z|<1$, and by analytic continuation elsewhere. The \emph{Pochhammer symbol} is given by $(v)_k=v(v+1)\cdots(v+k-1)$. The function ${}_2F_1(a,b;c;z)
$ is called the \emph{(Gaussian) hypergeometric function}. The \emph{regularized generalized hypergeometric function} is defined by
\begin{equation}\label{gauss2}
{}_p\tilde{F}_q\bigg({a_1,\ldots,a_p \atop b_1,\ldots,b_q} \; \bigg| \;z\bigg)=\frac{1}{\Gamma(b_1)\cdots\Gamma(b_q)}\, {}_pF_q\bigg({a_1,\ldots,a_p \atop b_1,\ldots,b_q} \; \bigg| \;z\bigg).    
\end{equation}
It is immediate from the series representation (\ref{gauss}) that the generalized hypergeometric function and the regularized generalized hypergeometric function reduce to ones of lower order if one of the $a_h$'s, $h=1,\ldots,p$, is equal to one of the $b_j$'s, $j=1,\ldots,q$. For example, for $p,q\geq1$,
\begin{align}\label{fred}
{}_pF_q\bigg({a_1,\ldots,a_p \atop b_1,\ldots,b_{q-1},a_{p}} \; \bigg| \;z\bigg)&={}_{p-1}F_{q-1}\bigg({a_1,\ldots,a_{p-1} \atop b_1,\ldots,b_{q-1}} \; \bigg| \;z\bigg), \\
\label{fred2}
{}_p\tilde{F}_q\bigg({a_1,\ldots,a_p \atop b_1,\ldots,b_{q-1},a_{p}} \; \bigg| \;z\bigg)&=\frac{1}{\Gamma(a_p)}\,{}_{p-1}\tilde{F}_{q-1}\bigg({a_1,\ldots,a_{p-1} \atop b_1,\ldots,b_{q-1}} \; \bigg| \;z\bigg).
\end{align}

The \emph{Lauricella function} $F_C$ is defined by
\begin{align}
\displaystyle
F_C^{(n)} ( a, b; c_1, \ldots, c_n; z_1, \ldots, z_n )  =
\sum_{k_1= 0}^\infty \cdots \sum_{k_n= 0}^\infty
\frac {(a)_{k_1 + \cdots + k_n} (b)_{k_1 + \cdots + k_n}}
{( c_1 )_{k_1} \cdots ( c_n )_{k_n}}
\frac {z_1^{k_1} \cdots z_n^{k_n}}
{k_1! \cdots k_n!},
\label{hyp}
\end{align}
for $|z_1|^{1/2}+\cdots+|z_n|^{1/2}<1$.

The hypergeometric function satisfies the identity
\begin{align}
{}_2F_1\bigg({a,b \atop c}\;\bigg|\;z\bigg)&=\frac{\Gamma(c)\Gamma(b-a)}{\Gamma(b)\Gamma(c-a)}(-z)^{-a}\,{}_2F_1\bigg({a,a-c+1 \atop a-b+1}\;\bigg|\;\frac{1}{z}\bigg)\nonumber\\
&\quad+\frac{\Gamma(c)\Gamma(a-b)}{\Gamma(a)\Gamma(c-b)}(-z)^{-b}\,{}_2F_1\bigg({b,-c+1 \atop b-a+1}\;\bigg|\;\frac{1}{z}\bigg)
. \label{apack}
\end{align}
If $\mathrm{Re}(c-a-b)>0$, then
\begin{align}\label{2f11}
{}_2F_1\bigg({a,b \atop c} \; \bigg| \; 1\bigg)=\frac{\Gamma(c)\Gamma(c-a-b)}{\Gamma(c-a)\Gamma(c-b)}.
\end{align}
The following asymptotic expansion holds for the hypergeometric function. As $z\rightarrow 1^-$,
\begin{equation}
{}_2F_1\bigg({a,b \atop a+b} \; \bigg| \;z\bigg)=-\frac{\Gamma(a+b)}{\Gamma(a)\Gamma(b)}\ln(1-z)-\frac{\Gamma(a+b)}{\Gamma(a)\Gamma(b)}\big(\psi(a)+\psi(b)+2\gamma\big)+o(1) \label{2f1exp}   
\end{equation}
(see http://functions.wolfram.com/07.23.06.0014.01).

For $|z|<1$ and $a\in\mathbb{N}$ a non-negative integer the hypergeometric function takes an elementary form:
\begin{align}
{}_2F_1\bigg({a+\frac{1}{2},1 \atop a+\frac{3}{2}} \; \bigg| \; z\bigg)&=\sum_{k=0}^\infty\frac{(a+\frac{1}{2})_k}{(a+\frac{3}{2})_k}z^k=\sum_{k=0}^\infty\frac{2a+1}{2k+2a+1}z^k \nonumber \\
&=\frac{2a+1}{z^{a}}\bigg(\frac{\tanh^{-1}(\sqrt{z})}{\sqrt{z}}-\sum_{k=0}^{a-1}\frac{z^k}{2k+1}\bigg), \label{2f1redtanh} \end{align}
where the final equality follows since $\sum_{k=0}^\infty z^{k}/(2k+1)=z^{-1/2}\tanh^{-1}(\sqrt{z})$. Another elementary form is given by the following. For $|z|<1$ and $n\in\mathbb{Z}^+$,
\begin{align}
{}_2F_1\bigg({n,1 \atop n+1} \; \bigg| \; z\bigg)&=\sum_{k=0}^\infty \frac{(n)_k}{(n+1)_k}z^k=\sum_{k=0}^\infty\frac{nz^k}{n+k}=\frac{n}{z^n}\bigg(-\ln(1-z)-\sum_{k=1}^{n-1}\frac{z^k}{k}\bigg). \label{flog}
\end{align}
Further special cases of the hypergeometric function are given by
\begin{align}
{}_2F_1\bigg({\frac{1}{6},1 \atop \frac{7}{6}} \; \bigg| \; z\bigg)&=\frac{1}{12z^{1/6}}\big(4\tanh^{-1}(z^{1/6})+2\sqrt{3}\tan^{-1}(1-z^{1/3},\sqrt{3}z^{1/6})\nonumber\\
&\quad+\ln(z^{1/3}+z^{1/6}+1)-\ln(z^{1/3}-z^{1/6}+1)\big),\label{red11} \\
{}_2F_1\bigg({\frac{5}{6},1 \atop \frac{11}{6}} \; \bigg| \; z\bigg)&=\frac{5}{12z^{5/6}}\big(4\tanh^{-1}(z^{1/6})-2\sqrt{3}\tan^{-1}(1-z^{1/3},\sqrt{3}z^{1/6})\nonumber\\
&\quad+\ln(z^{1/3}+z^{1/6}+1)-\ln(z^{1/3}-z^{1/6}+1)\big), \label{red22}\\
{}_2F_1\bigg({\frac{1}{4},1 \atop \frac{5}{4}} \; \bigg| \; z\bigg)&=\frac{1}{2z^{1/4}}\big(\tanh^{-1}(z^{1/4})+\tan^{-1}(z^{1/4})\big), \label{red33} \\
{}_2F_1\bigg({\frac{3}{4},1 \atop \frac{7}{4}} \; \bigg| \; z\bigg)&=\frac{3}{2z^{3/4}}\big(\tanh^{-1}(z^{1/4})-\tan^{-1}(z^{1/4})\big); \label{red44}
\end{align}
see http://functions.wolfram.com/07.23.03.0299.01, http://functions.wolfram.com/07.23.03.0387.01, http://functions.wolfram.com/07.23.03.ai1i.01 and http://functions.wolfram.com/07.23.03.aitp.01, respectively. Here, the \emph{two-argument inverse tangent} is given, for $x,y\in\mathbb{C}$, by
\begin{align*}
\tan^{-1}(x,y)=-\mathrm{i}\ln\bigg(\frac{x+\mathrm{i}y}{\sqrt{x^2+y^2}}\bigg)   \end{align*}
(see http://functions.wolfram.com/01.15.02.0001.01).

\subsection{The Meijer $G$-function}

The \emph{Meijer $G$-function} is defined, for $z\in\mathbb{C}$, by the contour integral
\begin{equation}\label{mdef}G^{m,n}_{p,q}\bigg(z \, \bigg|\, {a_1,\ldots, a_p \atop b_1,\ldots,b_q} \bigg)=\frac{1}{2\pi \mathrm{i}}\int_L\frac{\prod_{j=1}^m\Gamma(b_j-r)\prod_{j=1}^n\Gamma(1-a_j+r)}{\prod_{j=n+1}^p\Gamma(a_j-r)\prod_{j=m+1}^q\Gamma(1-b_j+r)}z^r\,\mathrm{d}r,
\end{equation}
where the integration path $L$ separates the poles of the factors $\Gamma(b_j-s)$ from those of the factors $\Gamma(1-a_j+s)$.  We use the convention that the empty product is $1$.

The Meijer $G$-function reduces to one of lower order if one of the $a_h$'s, $h=1,\ldots,n$, is equal to one of the $b_j$'s, $j=m+1,\ldots,q$, or if one of the $a_h$'s, $h=n+1,\ldots,p$, is equal to one of the $b_j$'s, $j=1,\ldots,m$. For example, 
\begin{align}
\label{lukeformula0}G_{p,q}^{m,n}\bigg(z \; \bigg| \;{a_1,\ldots,a_{p} \atop b_1,\ldots,b_{q-1},a_1}\bigg)&=G_{p-1,q-1}^{m,n-1}\bigg(z \; \bigg| \;{a_2,\ldots,a_{p} \atop b_1,\ldots,b_{q-1}}\bigg), \quad n,p,q\geq 1, \\
\label{lukeformula}G_{p,q}^{m,n}\bigg(z \; \bigg| \;{a_1,\ldots,a_{p-1},b_1 \atop b_1,\ldots,b_q}\bigg)&=G_{p-1,q-1}^{m-1,n}\bigg(z \; \bigg| \;{a_1,\ldots,a_{p-1} \atop b_2,\ldots,b_q}\bigg), \quad m,p,q\geq 1.     
\end{align}
Provided certain conditions are met, the $G$-function may be expressed as a finite sum of generalized hypergeometric functions. One example is given by equation 16.17.2 of \cite{olver}, which we now recall. Suppose that $p\leq q$ and that none of the $b_h$ terms, $h=1,\ldots,m$, differ by an integer or zero, and all poles are simple. Then
\begin{align}
G^{m,n}_{p,q}\bigg(z \, \bigg|\, {a_1,\ldots, a_p \atop b_1,\ldots,b_q} \bigg)&=\sum_{h=1}^m A_{p,q,h}^{m,n}(z)\, {}_pF_{q-1}\bigg({1+b_h-a_1,\ldots,1+b_h-a_p \atop 1+b_h-b_1,\ldots*\ldots,1+b_h-b_q} \; \bigg| \;(-1)^{p-m-n}z\bigg),\label{fgh}
\end{align}
where
\begin{align*}
A_{p,q,h}^{m,n}(z)= \frac{\prod_{j=1, \,j\not=h}^m\Gamma(b_j-b_h)\prod_{j=1}^n\Gamma(1+b_h-a_j)z^{b_h}}{\prod_{j=m+1}^q\Gamma(1+b_h-b_j)\prod_{j=n+1}^p\Gamma(a_j-b_h)}.   
\end{align*}
Here, the notation $*$ means that the entry $1+b_h-b_h$ is omitted.

\section*{Acknowledgements}
The author is funded in part by EPSRC grant EP/Y008650/1. 

\footnotesize

\end{document}